\newtheorem{maintheorem}{Theorem}
\newtheorem{teo}{Theorem}[section]
\newtheorem{defin}[teo]{Definition}
\newtheorem{lema}[teo]{Lemma}
\newtheorem{cor}[teo]{Corollary}
\newtheorem{prop}[teo]{Proposition}
\newtheorem{ex}[teo]{Example}
\newtheorem{obs}[teo]{Remark}
\newcommand{\ku}{\Bbbk}
\newcommand{\vu}{\vspace{.1cm}}
\newcommand{\g}{\mathcal{G}}
\newcommand{\K}{\mathcal{K}}
\begin{document}

\thispagestyle{empty}

\title[Groupoid Actions on Sets, Duality and a Morita Context]{Groupoid Actions on Sets, Duality and a Morita Context}
\author[Della Flora, Fl\^ores, Morgado, Tamusiunas]
{Saradia Della Flora, Daiana Fl\^ores, Andrea Morgado, Tha\'isa Tamusiunas}

\address{Departamento de Matem\'atica, Universidade Federal de Santa Maria,
	97105-900, Santa Maria, RS, Brazil} \email{ saradia.flora@ufsm.br, flores@ufsm.br}

\address{Instituto de F\'isica e Matem\'atica, Departamento de Matem\'atica e Estat\'istica,  Universidade Federal de Pelotas, Caixa Postal 354, 96160-900, Pelotas, RS, Brazil} \email{andrea.morgado@ufpel.edu.br}

\address{Instituto de Matem\'atica, Universidade Federal do Rio Grande do Sul,
	91509-900, Porto Alegre, RS, Brazil} \email{thaisa.tamusiunas@gmail.com}

%%%%%%%%%%%%%%%%%%%%%%%%%%%%%%%%%%%%%%%%%%%%%%%%%%%%%%%%%%%%%%%%%%%%%%%%%%%%%%%%%%%%

\maketitle

\begin{abstract}
Let $\g$ and $\K$ be groupoids. We present the notion of a $(\g_{\alpha},\K_{\beta})$-set and we prove a duality theorem in this context, which extends the duality theorem for graded algebras by groups obtained in \cite{a1}. For $A$ a unital $\g$-graded algebra and $X$ a finite split $\g$-set, we show that there is an isomorphism between the category of the left $A$-modules $X$-graded and the category of the left $A \#_{\alpha}^{\g}X$-modules. As an application of this isomorphism, we construct a Morita context. 
\end{abstract}

 \vspace{0,5 cm}

\noindent \textbf{2010 AMS Subject Classification:} Primary 16D90, 20L05. Secondary 16W50.

\noindent \textbf{Keywords:} groupoid actions, $\g$-sets, duality theorem, Morita context.

%%%%%%%%%%%%%%%%%%%%%%%%%%%%%%%%%%%%%%%%%%%%%%%%%%%%%%%%%%%%%%%%%%%%%%%%%%%%%%%%%%%%%%%%
\section{Introduction}

A groupoid is a small category whose morphisms are invertible. It is clear that any group is a groupoid, viewed as a category with a unique object. Thus, it is interessing to investigate under which conditions classic results in the context of groups can be extended to groupoids.

One of the main goal of this paper is to relate the concepts of groupoids grading and groupoids actions on a set. Specifically, we present a generalization for the duality theorem given in \cite{a1} for action and grading of groups. The concept of groupoid grading was introduced in \cite{Lun2} as a natural generalization of group grading. The notion of groupoid action on a set was introduced in \cite{PaqTam}. More precisely, an action of a groupoid $\g$ on a set $X$ is a collection of subsets $X_g$ of $X$ and bijections $\alpha_g: X_{g^{-1}} \to X_g$, $g \in \g$, satisfying some appropriate conditions of compatibility. In this case $X$ is said a $\g$-set. 

Given $A$ a $\g$-graded algebra and $X$ a split $\g$-set, we define a new algebra, called smash product and denoted by $A \#_{\alpha}^{\g}X$; see Section 3. Now let $\K$ be a groupoid and consider the skew groupoid ring $(A\#_{\alpha}^{\g} X)*_{\gamma} \K$, which is well defined according to a compatibility relation between the actions of the groupoids $\g$ and $\K$ on $X$. This relation allows us to define an action of the groupoid $\K$ on the algebra $A \#_{\alpha}^{\g}X$; see Section 4. In this case, $X$ is said $(\g_{\alpha}, \K_{\beta})$-set. With this notation, we have a duality theorem. 

\begin{maintheorem} Let $\g$ and $\K$ be groupoids such that the sets of objects $\g_0$ and $\K_0$ are finite, $A$ be an algebra $\g$-graded and $X$ be a finite split $(\g_{\alpha}, \K_{\beta})$-set. If the action of $\K$ on $X$ is fully faithful, then 
	\begin{align*}
	(A \#_{\alpha}^{\g} X)*_{\gamma} \K \simeq End({A \#_{\alpha}^{\g} X}_{A \#_{\lambda}^{\g} O^{\K}}),
	\end{align*}
	where  $O^{\K}$ is the $\K$-orbit of $X$.
\end{maintheorem}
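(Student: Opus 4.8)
The plan is to exhibit an explicit isomorphism
$$\Phi \colon (A \#_{\alpha}^{\g} X)*_{\gamma} \K \longrightarrow End\big({A \#_{\alpha}^{\g} X}_{A \#_{\lambda}^{\g} O^{\K}}\big)$$
by letting the skew groupoid ring act on $B := A \#_{\alpha}^{\g} X$ from the left and recognizing that action as right $C$-linear, where $C := A \#_{\lambda}^{\g} O^{\K}$. Concretely, $B$ becomes a left module over $B *_{\gamma} \K$ via left multiplication by elements of $B$ together with the action $\gamma$ of $\K$, so that a generator $b\,\delta_k$ sends $x \in B$ to $b\,\gamma_k(x)$. Since the right $C$-module structure on $B$ commutes both with left multiplication (by associativity in $B$) and with the $\K$-action (by the compatibility built into the $(\g_\alpha,\K_\beta)$-set structure of Section~4), each such operator is right $C$-linear, and I obtain a well-defined map $\Phi(b\,\delta_k)=\big(x \mapsto b\,\gamma_k(x)\big)$, extended additively.

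First I would check that $\Phi$ is a ring homomorphism. Using the multiplication rule $(b\,\delta_k)(b'\,\delta_{k'}) = b\,\gamma_k(b')\,\delta_{kk'}$ of the skew groupoid ring (nonzero exactly when $kk'$ is composable in $\K$) together with the fact that $\gamma$ is an action, one verifies $\Phi\big((b\,\delta_k)(b'\,\delta_{k'})\big)=\Phi(b\,\delta_k)\circ\Phi(b'\,\delta_{k'})$; the composability condition on $\K$ matches precisely the vanishing of the corresponding composite operators. The finiteness of $\g_0$ and $\K_0$ and of the split $\g$-set $X$ guarantees that $B$ and $C$ carry the orthogonal idempotents indexed by the points of $X$ as local units, so that $\Phi$ respects units in the appropriate local sense and lands in the expected endomorphism ring.

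Next I would establish injectivity. Suppose $\Phi\big(\sum_i b_i\,\delta_{k_i}\big)=0$ with the $k_i$ pairwise distinct. Evaluating the operator on the idempotents $p_x$ attached to the points $x\in X$ and separating terms according to the $X$-grading of $B$, the faithfulness part of the fully faithful hypothesis forces each operator $x\mapsto b_i\,\gamma_{k_i}(x)$ to vanish independently, whence $b_i=0$ for all $i$. The role of faithfulness here is exactly to prevent distinct morphisms of $\K$ from inducing the same translation on $X$, and hence the same operator on $B$.

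The main obstacle will be surjectivity: showing that every right $C$-linear endomorphism $f$ of $B$ lies in the image of $\Phi$. My strategy is to decompose $X$ into its $\K$-orbits and to invoke fullness, which supplies, for each pair of points lying in one orbit, a (unique, by faithfulness) morphism of $\K$ carrying one to the other. This lets me transport $f$ onto the sub-$\g$-set $O^{\K}$ and recognize its restriction there as a right $C$-multiplication, hence as left multiplication by a uniquely determined element of $B$; reassembling the orbit contributions then realizes $f$ as a sum $\sum_i b_i\,\delta_{k_i}$, with the $k_i$ the orbit-connecting morphisms furnished by fullness and the $b_i$ read off from the $C$-linear data. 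In contrast with the group-graded situation of \cite{a1}, the delicate points are the bookkeeping of domains and codomains of the morphisms of $\K$ and the verification that right $C$-linearity forces $f$ to be suitably constant along the $\g$-grading within each $\K$-orbit. Once this is checked, $\Phi$ is bijective and the asserted isomorphism follows.
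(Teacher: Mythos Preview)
Your route differs from the paper's. You try to verify directly that the canonical map $\Phi\colon B*_\gamma\K\to End(B_C)$ is bijective; the paper instead shows that $(A\#_\alpha^\g X)^\gamma\subseteq A\#_\alpha^\g X$ is a $\gamma$-Galois extension by exhibiting the coordinate system $\{u_e,u_e\}_{e\in\g_0}$ with $u_e=\sum_{x\in X_e}1_e\delta_x$, checks the Galois identity using the fully faithful hypothesis, invokes \cite[Theorem~5.3]{pd}, and finishes with the identification $(A\#_\alpha^\g X)^\gamma\simeq A\#_\lambda^\g O^\K$ from Proposition~\ref{partefixa}. Your $\Phi$ is exactly the map that the cited theorem proves to be an isomorphism, so in effect you are attempting to reprove that result in this special case rather than applying it.

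There are two problems. First, you misread the hypothesis: in this paper ``fully faithful'' is not a conjunction of categorical fullness and faithfulness but a single freeness condition ($\beta_k(x)=x$ for some $x$ forces $k\in\K_0$). There is no separate ``fullness'' to invoke; the fact you actually use, that $\K$ acts simply transitively on each of its orbits, does follow from freeness together with the definition of orbit, but your division of labor between ``full'' and ``faithful'' in the injectivity and surjectivity steps is misleading. Second, and more seriously, your surjectivity argument has a genuine gap. You assert that after transporting along orbits an arbitrary $f\in End(B_C)$ can be ``recognized as a right $C$-multiplication, hence as left multiplication by a uniquely determined element of $B$'', but you supply no mechanism for this. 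What is required is a structural fact about $B$ as a right $C$-module (finitely generated projective, with explicit generators tied to the idempotents $1_e\delta_x$) that lets every $C$-linear endomorphism be expanded in terms of the operators $b\,\gamma_k(-)$. That is precisely what a Galois coordinate system delivers, and the paper's verification that $\sum_{e\in\g_0} u_e\gamma_k(u_e{\bf 1}_{k^{-1}})$ vanishes for $k\notin\K_0$ and equals ${\bf 1}_p$ for $p\in\K_0$ is exactly this missing ingredient. Without establishing some equivalent of it, your ``reassembly'' step is an assertion, not an argument.
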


Another aim of this paper is to describe the category of the left $A \#_{\alpha}^{\g}X$-modules, $_{A \#_{\alpha}^{\g} X}\mathcal{M}$. We show that this category is isomorphic to the category of the left $A$-modules $X$-graded; see Section 5. For each $x \in X$, consider $\g_x$ the $x$-stabilizer in $\g$ and  $A^{\g_x}=\oplus_{g \in \g_x} A_g$. As a consequence of this category isomorphism, we present a condition for that $A^{\g_x}$ and $A \#_{\alpha}^{\g}X$ be Morita equivalent.

\begin{maintheorem} \begin{itemize}
\item[(a)] If $A^{\g_x}$ and $A \#_{\alpha}^{\g}X$ are Morita equivalent and $X_e \neq \emptyset$, for all $e \in \g_0$, then
$\sum\limits_{{g \in \g}\atop{\alpha_g(y)=x}}A_{g^{-1}}A_g=\sum_{g \in \g}A_{d(g)}, \mbox { for all } y \in X.$
\item[(b)] If $\sum\limits_{{g \in \g}\atop{\alpha_g(y)=x}}A_{g^{-1}}A_g=\sum_{g \in \g}A_{d(g)}, \mbox { for all } y \in X$, then $A^{\g_x}$ and $A \#_{\alpha}^{\g}X$ are Morita equivalent.
\end{itemize}
\end{maintheorem}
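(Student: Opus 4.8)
The plan is to realize the two algebras as corners of one another via an idempotent and then invoke the standard Morita theory of idempotents. Write $R = A\#_{\alpha}^{\g} X$. The smash product carries orthogonal idempotents $p_y$, $y \in X$, coming from the $X$-component, and since $X$ is finite one has $\sum_{y\in X} p_y = 1_R$. A direct inspection of the multiplication of $R$ shows that $p_xRp_x \cong \bigoplus_{g \in \g_x} A_g = A^{\g_x}$, so $A^{\g_x}$ is (isomorphic to) the corner ring $p_xRp_x$. I would therefore phrase the whole statement as an instance of the following classical fact: for an idempotent $e$ of a ring $R$, the canonical Morita context $(R,\,eRe,\,Re,\,eR,\,\tau,\,\mu)$ with $\tau\colon Re\otimes_{eRe}eR\to R$ and $\mu\colon eR\otimes_R Re\to eRe$ the multiplication maps has $\mu$ always surjective and $\operatorname{im}\tau = ReR$; hence the context is a Morita equivalence if and only if $ReR=R$. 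This is exactly the context produced from the category isomorphism of Section~5, and so everything reduces to deciding when $R\,p_x\,R = R$.

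First I would reduce the global identity $R\,p_x\,R = R$ to a family of local conditions. Since $1_R=\sum_{y}p_y$ with the $p_y$ orthogonal idempotents, $R\,p_x\,R = R$ holds if and only if $p_y\in R\,p_x\,R$ for every $y\in X$; multiplying on both sides by $p_y$ turns this into $p_y\in (p_yRp_x)(p_xRp_y)$. The next, and main, step is to identify these corners explicitly. Using the grading and the groupoid action one checks that $p_yRp_x\cong\bigoplus_{\alpha_g(x)=y}A_g$ and $p_xRp_y\cong\bigoplus_{\alpha_h(y)=x}A_h$, all the relevant $g$ (resp. $h$) sharing source $d(g)=e_y$, where $e_y\in\g_0$ is the object with $y\in X_{e_y}$. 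Their product is a graded ideal of $p_yRp_y\cong A^{\g_y}$, and its identity component (degree $\mathrm{id}_{e_y}$) is exactly $\sum_{\alpha_h(y)=x}A_{h^{-1}}A_h\subseteq A_{d(h)}$. Since a two-sided ideal of $A^{\g_y}$ is the whole ring precisely when it contains the unit $1_{A_{e_y}}$, one obtains $p_y\in(p_yRp_x)(p_xRp_y)$ if and only if $\sum_{\alpha_g(y)=x}A_{g^{-1}}A_g = A_{d(g)}$, which is the asserted equality at $y$.

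With this dictionary both implications follow. For (b), assuming the equality for all $y$ gives $p_y\in R\,p_x\,R$ for all $y$, hence $R\,p_x\,R=R$, so $\tau$ is onto and the canonical context is a Morita equivalence between $A^{\g_x}\cong p_xRp_x$ and $R$. For (a), a Morita equivalence makes the context strict, so both $\tau$ and $\mu$ are onto, whence $R\,p_x\,R=R$ and the local equalities hold for every $y$; here the hypothesis $X_e\neq\emptyset$ for all $e\in\g_0$ is what guarantees that every object is realised by a point of $X$, so that the corner computation above is non-degenerate and the base $\sum_{g}A_{d(g)}$ on the right-hand side is genuinely attained. The main obstacle I anticipate is precisely the middle step: pinning down the corners $p_yRp_x$, $p_xRp_y$ and the identity component of their product requires careful bookkeeping of composability, domains and ranges in $\g$ together with the exact multiplication rule of the smash product, and it is there that the split hypothesis on $X$ and the finiteness of $\g_0$ are used.
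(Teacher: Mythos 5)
Your proof is correct, and at bottom it is the paper's own argument in a cleaner wrapper: writing $R=A\#_{\alpha}^{\g}X$, the bimodules the paper builds, $^xV=\oplus_{y}V_{x,y}$ and $V^x=\oplus_{y}V_{y,x}$, are exactly $Rp_x$ and $p_xR$ for the idempotent $p_x=1_e\delta_x$ (where $x\in X_e$), via $V_{x,y}\ni a_h\mapsto a_h\delta_x\in p_yRp_x$, and the paper's pairings $[\,,\,]$ and $(\,,\,)$ are then just multiplication in $R$. Your packaging buys the context axioms (part (i) of the paper's Theorem \ref{morita}) and the surjectivity of $(\,,\,)$ for free from the general theory of corner idempotents, where the paper verifies associativity by a direct degree computation; the substantive content is identical in both treatments, namely the identification of $p_yRp_x$, $p_xRp_y$ and of the identity component $\sum_{\alpha_h(y)=x}A_{h^{-1}}A_h$ of their product, which is the same composability bookkeeping the paper carries out inside $[\,,\,]$ and $(\,,\,)$. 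Two minor points: in your description of $p_yRp_x$ the relevant $g$ satisfy $d(g)=e_x$ and $r(g)=e_y$ (it is the $h$'s indexing $p_xRp_y$ whose source is $e_y$), a slip that does not propagate; and your inference in (a) that an abstract Morita equivalence forces this particular context to be strict is exactly the step the paper itself takes silently in passing from Theorem B(a) to the hypothesis ``the Morita context is strict'' of its Theorem \ref{morita}(ii), so your proof is faithful to the paper there, for better or worse.
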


The paper is organized as follows: in Section 2 we exhibit some preliminary results concerning groupoids, groupoid gradings, groupoid actions on algebras and Galois extension. In Section 3 we present the definitions of $\g$-set and of the smash product $A \#_{\alpha}^{\g}X$. In Section 4 we introduce the definition of $(\g_{\alpha}, \K_{\beta})$-set and we construct an action of the groupoid $\K$ on $A \#_{\alpha}^{\g}X$. Then we prove a duality theorem that extends the duality theorem for graded algebras by groups obtained in \cite{a1}. The Section 5 is dedicated to Morita theory.

Throughout this paper, $\ku$ denotes a field and all algebras are over $\ku$, associative and unital, unless otherwise specified. If $A$ is an algebra, then we denote by $_{A}\mathcal{M}$ the category of left $A$-modules and by $\mathcal{M}_{A}$ the category of right $A$-modules.

\section{Preliminaries}
%%%%%%%%%%%%%%%%%%%%%%%%%%%%%%%%%%%%%%%%%%%%%%%%%%%%%%%%%%%%%%%%%%%%%%%%%%%%%%%%%%%%%%%%%%%%%%%%%%%%%%

\subsection{Groupoids and groupoids gradings} \label{groupoid}

A {\it{groupoid}} $\g$ is a small category in which every morphism is invertible. Given $g\in \g$, the {\it domain} and the {\it  range }  of $g$ will be denoted by $d(g)$ and $r(g)$, respectively. Also, $\g_0$ will denote the set of the objects of $\g$. Hence, we have maps $d,r:\g \to \g_0$.  Given $e\in \g_0$, $id_e$ will denote the identity morphism of $e$.  Observe that $id:\g_0\to \g$, given by $id(e)=id_e$, is an injective map. Thus, we identify $\g_0\subseteq \g$. 

Let $\mathcal{G}_{d}\times \!_{r}\mathcal{G}=\{ (g,h) \in \g \times \g : d(g)=r(h)\}$. The map $m: \mathcal{G}_{d}\times \!_{r}\mathcal{G} \to \mathcal{G}$,  $m(g,h)=gh$, is called \emph{composition map}. For each pair of objects $e,f\in \mathcal{G}_0$, denote by $\mathcal{G}(e,f)=\{g \in \g: d(g)=e, r(g)=f\}$. In particular $\mathcal{G}(e,e)=\mathcal{G}_e$ is a group, called {\it isotropy (or principal) group associated to $e$}. For all $e \in \g_{0}$, denote by $D_{e}=\{g\in \mathcal{G}: d(g)=e\}$ and $R_e=\{g\in \mathcal{G}: r(g)=e\}$. Two groupoids $\g$ and $\g'$ are {\it equivalent}, and we write $\g \simeq \g'$, if they are equivalent as categories.

%Define the following equivalence relation on $\g_0$: \begin{align}\label{rel-connected}
%e \sim f \quad \text{if and only if}  \quad \g(e,f)\neq \emptyset.
%\end{align}	
%A groupoid $\g$ is {\it{connected}} if $e \sim f$, for all $e,f \in \g_{0}$. Each equivalence class $S$ of $\sim$ in $\g_{0}$ determines a connected subgroupoid $\g_S$, whose set of objects is $S$. Then, it is clear that the groupoid $\g$ is the disjoint union of the groupoids $\g_S$: $\g = \coprod_{S \in \g_{0} / \sim} \g_S$.

A non-empty subset $\mathcal{H}$ of $\g$ is called a {\it subgroupoid} of $\g$ if is stable by multiplication and by inverse. If, in addition, $\mathcal{H}_{0} = \g_{0}$, then $\mathcal{H}$ is said to be a {\it wide subgroupoid}. 
%According to \cite{pata2}, a subgroupoid is called \emph{normal} if $g\mathcal{H}g^{-1} \neq \emptyset$ and  $g\mathcal{H}g^{-1} \subseteq \mathcal{H}$.

Consider $\mathcal{H}$ a wide subgroupoid of $\g$. Define the following equivalence relation on $\g$: \begin{center}$g \equiv_{\mathcal{H}}  h$ if and only if $gh^{-1} \in \mathcal{H}$, for all $g, h \in D_e$, $e \in \g_0$. \end{center} We denote by $\g / \mathcal{H}$ the set of right cosets of $\g$ by the relation $\equiv_{\mathcal{H}}$. 
%By \cite[Lemma 3.12]{pata2}, if $\mathcal{H}$ is normal, then $\g / \mathcal{H}$  is a groupoid, called the \emph{quotient groupoid of $\g$ by $\mathcal{H}$}.

Following  \cite{Lun2}, an algebra $A$ is {\it{graded by a groupoid $\g$}}, or {\it{$\g$-graded}}, if there exists a family $\{A_g\}_{g\in \mathcal{G}}$ of subspaces of $A$ such that $A=\oplus_{g \in \g}A_g$, satisfying
\begin{align*}
A_gA_h
& \left\{\begin{array}{rl}
\subseteq A_{gh}, &\mbox{if } (g,h) \in \mathcal{G}_{d}\times \!_{r}\mathcal{G}\\
=0, & \mbox{otherwise},
\end{array}
\right.
\end{align*}
for all $g,h \in \g$. Clearly, $A_e$ is a subalgebra of $A$, for all  $e \in \g_0$. Moreover, if $\g_0$ is finite, then each $A_e$ is unital with identity element $1_e$ and $1_A=\sum_{e \in \g_0}1_e$.

\subsection{Groupoids actions on algebras and Galois extension}
According to \cite{pd}, an {\it action of a groupoid $\g$ on an algebra $A$} is a pair $\beta=(E_g,\beta_{g})_{g\in \g}$,
where $E_g=E_{r(g)}$, $g\in \g$, is an ideal of $A$ and $\beta_{g}: E_{g^{-1}}\to E_g$ is an isomorphism of algebras, satisfying the following conditions:
\begin{itemize}
\item[(i)] $\beta_e = id_{E_e}$ is the identity map of $E_e$, for all $e\in \mathcal{G}_0$,
\item[(ii)] $\beta_{gh}(x)=\beta_g(\beta_h(x))$, for all $(g,h)\in \mathcal{G}_{d}\times \!_{r}\mathcal{G}$ and $x\in E_{h^{-1}}=E_{(gh)^{-1}}$.
\end{itemize}

In particular $\beta$ induces an action of the group $\g_e$ on the ideal $E_e$, for all $e \in \g_0$. 

\vu

The {\it{skew groupoid ring $A\ast_{\beta}\g$}} corresponding to $\beta$, introduced in \cite{pdd}, is the direct sum
\begin{align*}
A\ast_{\beta}\g=\oplus_{g\in \g}E_g\delta_g,
\end{align*}
where each symbol $\delta_g$ is a placeholder for the $g$-th component, with the usual addition and with the multiplication induced by the rule
\[
(x\delta_g)(y\delta_h)=
\left\{\begin{array}{rl}
x\beta_g(y)\delta_{gh}, & \mbox{ if } (g,h) \in \mathcal{G}_{d}\times \!_{r}\mathcal{G}\\
0, & \mbox{ otherwise,}
\end{array}
\right.
\]
for all $g,h \in \g$, $x \in E_g$ and $y \in E_h$. Clearly, this algebra is associative and $\g$-graded by construction.

%Let $\beta=(E_g, \beta_g)_{g \in \g}$ be an action of $\g$ on $A$ such that $E_e$ is unital for all $e \in \g_0$ and $A= \oplus_{e \in \g_o} E_e$. By \cite[Proposition 2.2]{FP}, this is equivalent to say that $A$ is a $\ku \g$-module algebra. In this case, $A \# \ku \g \simeq A \ast_{\beta} \g$.

\vu

Following \cite{pd} , the {\it{subalgebra of invariants}} of $A$ under $\beta$ is the set $$A^{\beta}=\{x \in A: \beta_g(x1_{g^{-1}})=x1_{g}, \forall g \in \g \}.$$ Moreover, $A$ is a {\it $\beta$-Galois extension} of $A^{\beta}$ if there exist elements $x_i,y_i \in A$, $1 \leq i \leq m$, such that $\sum_{1 \leq i \leq m} x_i \beta_g(y_i1_{g^{-1}}) = \delta_{e,g}1_e$, for all $e \in \g_0, g \in \g$. The set $\{x_i,y_i\}_{1 \leq i \leq m}$ is called a {\it Galois coordinate system} of $A$ over $A^{\beta}$.

\vu
%%%%%%%%%%%%%%%%%%%%%%%%%%%%%%%%%%%%%%%%%%%%%%%%%%%%%%%%%%%%%%%%%%%%%%%%%%%%%%%%%%%%%%%%%%%%%%%%%%%%%%%%%%%

\section{$\g$-sets and smash products}
%%%%%%%%%%%%%%%%%%%%%%%%%%%%%%%%%%%%%%%%%%%%%%%%%%%%%%%%%%%%%%%%%%%%%%%%%%%%%%%%%%%%%%%%%%%%%%%%%%%%%%%%%%%%
\label{smash} According to \cite{PaqTam}, an \emph{action of a groupoid $\g$ on a set $X$}  is a pair $\alpha=( X_g, \alpha_g)_{g \in \mathcal{G}}$, where $X_g=X_{r(g)}$, $g \in \mathcal{G}$, is a subset of $X$ and $\alpha_g: X_{g^{-1}} \to X_g$ is a bijection satisfying the following conditions:
\begin{itemize}
\item[(i)] $\alpha_e= {id}_{X_e}$, for all $e \in \g_0$,
\item[(ii)] $\alpha_g(\alpha_h(x))=\alpha_{gh}(x)$, for all $(g,h) \in \mathcal{G}_{d}\times \!_{r}\mathcal{G}$ and $x \in X_{h^{-1}}=X_{(gh)^{-1}}$.
\end{itemize}
In this case, $X$ is said to be a {\it{$\g$-set}}. Observe that the action $\alpha$ induces an action of the group $\g_e$ on the set $X_e$, for all $e \in \g_0$. If $X=\coprod_{e \in \g_0} X_e$ (disjoint union), then $X$ is said to be a {\it split $\g$-set}. 

Let $X$ be a $\g$-set via $\alpha=(X_g, \alpha_g)_{g \in \g}$ and $Y \subseteq X$. The subset $Y$ is \emph{$\alpha$-invariant} if $\alpha_g(Y \cap X_{g^{-1}}) \subseteq Y \cap X_g$, for all $g \in \g$.

\vu

\begin{ex}\label{ex1}
Every groupoid $\g$ is a split $\g$-set via $\alpha=(R_{r(g)}, \alpha_g)_{g \in \g}$, where $\alpha_g(h)=gh$, for all $g \in \g$ and $h \in R_{d(g)}$.
\end{ex}

\begin{ex}\label{ex2}
Every groupoid $\g$ is a split $\g$-set via $\beta=(D_{r(g)}, \beta_g)_{g \in \g}$, where $\beta_g(h)=hg^{-1}$, for all $g \in \g$ and $h \in D_{d(g)}$.
\end{ex}

\begin{ex} \label{nosplit}
Let $\g$ be a groupoid, $X$ a $\g$-set via $\alpha=(X_g, \alpha_g)_{g \in \g}$ and $\beta=(E_g, \beta_g)_{g \in \g}$ an action of $\g$ on the algebra $A$. It is straightforward to verify that $Y=\{\tau \in Map(X,A): \tau(X_g) \subseteq E_g, \forall g \in \g \}$ is a $\g$-set via $\gamma=(Y_g,\gamma_g)_{g \in \g}$, where $Y_g=\{\tau \in Y: \tau(X_h)=0, \forall X_h \neq  X_g \}$ and 
\begin{align*}
\gamma_g(\tau)(x) &= \left\{\begin{array}{rl}
\beta_g \tau \alpha_{g^{-1}}(x),&\mbox{ if } x \in X_g\\
0,&\mbox{ otherwise.}
\end{array}
\right.
\end{align*}
However, $Y$ is not split, since $Y_e \cap Y_f=\{0\}$, for all $e, f \in \g_0$.
\end{ex}

%Unless otherwise specified, we fix that every $\g$-set considered in this paper is split.

Let $X$ be a split $\g$-set via $\alpha=( X_g, \alpha_g)_{g \in \mathcal{G}}$. Since $X$ is a split $\g$-set, given $x \in X$, there exists $e \in \g_0$ such that $x \in X_e$. The {\it $\g$-orbit of} $x$ is defined by $o(x)=\{\alpha_g(x): g\in D_e\}$. Moreover, the set $O^{\g}=\{ o(x) : x \in X \}$ is called \emph{$\g$-orbit of} $X$.
Now define on $X$  the following relation: 
\begin{align}\label{rel_orbit}
	x \approx_{\g} y \,\,\, \mbox{  if and only if  } \,\,\,  y \in o(x),
\end{align}which is clearly an equivalence relation.

%If $x\notin X_g$, for all $g\in \g$, then we define the $\g$-orbit of $x$, $o(x)$, as the empty set. Otherwise, consider $H_x=\{h \in \g: x \in X_{h^{-1}}\}$. In this case, the $\g$-orbit of $x$ is the set $o(x)=\{\alpha_h(x): h \in H_x \}$.

%Observe that if $X$ is a split $\g$-set via $\alpha=(X_g, \alpha_g)_{g \in \g}$ and $x \in X_e$, $e \in \g_0$, then $o(x)=\{\alpha_g(x): g\in D_e\}$.

Let $\g$ be a groupoid such that $\g_0$ is finite, $A$ be a $\g$-graded algebra and $X$ be a split $\g$-set via $\alpha=( X_g, \alpha_g)_{g \in \mathcal{G}}$. We define the {\it{smash product}}, denoted by $A \#_{\alpha}^{\g} X$, as the direct sum 
\begin{align}\label{definsmash}
A \#_{\alpha}^{\g} X = \oplus_{g \in \g} \oplus_{x \in X_{d(g)}} A_g \delta_x,
\end{align}
where the $\delta_g$' s are symbols, with the multiplication induced by
\[
(a_g \delta_x)(b_h \delta_y) = 
\left\{ \begin{array}{rl}
a_gb_h \delta_y, & \text{if } (g,h) \in \mathcal{G}_{d}\times \!_{r}\mathcal{G} \mbox{ and } \alpha_h(y)=x\\
0, & \text{otherwise,}
\end{array}
\right.
\]
for all $g$, $h \in \g$, $a_g \in A_g$, $b_h \in A_h$ and $x \in X_{d(g)}$, $y \in X_{d(h)}$. If $d(g)=r(h)$, then $a_g b_h \in A_{gh}$, $x \in X_{d(g)} =X_{r(h)}$ and $y \in  X_{d(h)}=X_{d(gh)}$. Thus this multiplication is well-defined. When $\g$ is, in particular, a group, this definition coincides with the one given in \cite{smash}.

\vu

It is straightforward to see that $A \#_{\alpha}^{\g} X$ is an associative algebra with unity $1_{A \#_{\alpha}^{\g} X}=\sum_{e \in \g_0}\sum_{x \in X_e}1_e\delta_{x}$. Moreover, the map $\eta: A \to A \#_{\alpha}^{\g} X$, given by 
\begin{align} \label{imersao}
\eta(a_g)=\sum_{e \in \g_0}\sum_{x \in X_e}a_g1_e\delta_x=\sum_{x \in X_{d(g)}}a_g\delta_x,
\end{align}
is a monomorphism of algebras. Notice that $\{ 1_e\delta_x : e \in \g_0, \mbox{ } x \in X_e \}$ is a set of orthogonal idempotents.

\vu
The map $\eta$ induces a structure of left $A$-module over $A \#_{\alpha}^{\g} X$ via
\begin{align*}
a_g \cdot (b_h\delta_y)=\eta(a_g)(b_h\delta_y)= \left\{
\begin{array}{rl}
a_gb_h \delta_y, & \text{if } d(g)=r(h),\\
0, & \text{otherwise,}
\end{array} \right.
\end{align*}
for all $a_g \in A$ and $b_h\delta_y \in A \#_{\alpha}^{\g} X$.
Analogously $A \#_{\alpha}^{\g} X$ is a right $A$-module via
\begin{align*}
(b_h\delta_y) \cdot a_g  =(b_h\delta_y)\eta(a_g)= \left\{
\begin{array}{rl}
b_h a_g \delta_{\alpha_{g^{-1}}(y)}, & \text{if } d(h)=r(g),\\
0, & \text{otherwise,}
\end{array} \right.
\end{align*}
for all $a_g \in A$ and $b_h\delta_y \in A \#_{\alpha}^{\g} X$.  Moreover, $A \#_{\alpha}^{\g} X$ is a $A$-bimodule.

In particular, we have the following equalities:
\begin{align}\label{eta}
\begin{aligned}
&a_g\delta_x=\eta(a_g)(1_{d(g)}\delta_x), \text{ for all } a_g \delta_x \in A \#_{\alpha}^{\g} X,\\
& a_g\delta_{\alpha_{g^{-1}}(x)}=(1_{r(g)}\delta_x)\eta(a_g), \text{ for all } a_g \in A, x \in X_g.
\end{aligned}
\end{align}

Let $X$ and $Z$ be $\g$-sets via the actions $\alpha = (X_g, \alpha_g)_{g \in \g}$ and $\beta=(Z_g, \beta_g)_{g \in \g}$, respectively. According to \cite{PaqTam}, the map $\varphi: X \to Z$ is a {\it morphism of $\g$-sets} if the following conditions hold, for all $g \in \g$:
\begin{itemize}
\item[(i)] $\varphi(X_g) \subseteq Z_g$,
\item[(ii)] $\varphi(\alpha_g(x)) = \beta_g(\varphi(x))$, for all $x \in X_{g^{-1}}$.
\end{itemize} Moreover, if $\varphi$ is a injective (resp. surjective) then $\varphi$ is a {\it monomorphism} (resp. {\it epimorphism}) of $\g$-sets. Finally, $\varphi$ is a {\it isomorphism} of $\g$-sets if $\varphi$  is bijective.

The next theorem shows that if $\varphi: X \to Z$ is a morphism of $\g$-sets, then it induces a morphism of algebras between the correspondents smash products. For this purpose, we introduce the notation $I_{g,z}=\{x\! \in \! X_{d(g)}: \varphi(x) \! =z\}$, for all $g \in \g$ and $z \in Z$.

\begin{teo} \label{morfismosmash}
Let $X$ and $Z$ be finite split $\g$-sets via $\alpha = (X_g, \alpha_g)_{g \in \g}$ and $\beta = (Y_k , \beta_k )_{k \in \K}$, respectively, and $A$ a $\g$-graded algebra. If $\varphi: X \to Z$ is a morphism of $\g$-sets, then $\varphi^*: A \#_{\beta}^{\g} Z \to A \#_{\alpha}^{\g} X$, given by
\[
\varphi^*(a_g\delta_z)=
\left\{ \begin{array}{rl}
\sum\limits_{x \in I_{g,z}}a_g\delta_x, & \mbox{ if }  I_{g,z} \neq \emptyset \\
0, & \mbox{ otherwise,}
\end{array}
\right.
\]
for all $a_g\delta_z \in A \#_{\beta}^{\g} Z$, is a morphism of algebras. Moreover, if $\varphi$ is injective (resp. surjective), then $\varphi^*$ is surjective (resp. injective).
\end{teo}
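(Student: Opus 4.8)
The plan is to verify the algebra-morphism axioms on the generators $a_g\delta_z$ and extend by $\ku$-linearity. First I would settle well-definedness: since $I_{g,z}\subseteq X_{d(g)}$ by definition, each summand $a_g\delta_x$ appearing in $\varphi^*(a_g\delta_z)$ is a genuine generator of $A\#_\alpha^\g X$, so the formula produces an element of the target and extends to a $\ku$-linear map. I would also record the consequence of condition (i): $\varphi(X_{d(g)})\subseteq Z_{d(g)}$, so that, as $z$ runs over $Z_{d(g)}$, the fibres $I_{g,z}$ partition $X_{d(g)}$, each $x\in X_{d(g)}$ lying in exactly one of them (the one for $z=\varphi(x)$). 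This bookkeeping fact is used in every subsequent step.

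The heart of the argument is multiplicativity, which I would establish by comparing $\varphi^*\big((a_g\delta_z)(b_h\delta_w)\big)$ with $\varphi^*(a_g\delta_z)\,\varphi^*(b_h\delta_w)$ case by case. If $d(g)\neq r(h)$ both sides are $0$. If $d(g)=r(h)$, expanding the right-hand product gives $\sum_{x'\in I_{g,z}}\sum_{x\in I_{h,w}}(a_g\delta_{x'})(b_h\delta_x)$, and by the smash-product rule a term survives only when $\alpha_h(x)=x'$. Here the morphism condition (ii) is the crucial input: for $x\in I_{h,w}$ it yields $\varphi(\alpha_h(x))=\beta_h(\varphi(x))=\beta_h(w)$, while $\alpha_h(x)\in X_{r(h)}=X_{d(g)}$. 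Hence $\alpha_h(x)\in I_{g,z}$ exactly when $\beta_h(w)=z$, which is precisely the condition for the left-hand product $(a_g\delta_z)(b_h\delta_w)$ to be nonzero. In that case the double sum collapses to $\sum_{x\in I_{h,w}}a_gb_h\delta_x$, and since $d(gh)=d(h)$ forces $I_{gh,w}=I_{h,w}$, this equals $\varphi^*(a_gb_h\delta_w)$; when $\beta_h(w)\neq z$ every surviving condition fails and the right side is $0$, matching the left. This case analysis, hinging on (ii), is the step I expect to require the most care.

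Preservation of the identity then follows from the partition observation of the first paragraph: applying $\varphi^*$ to $1_{A\#_\beta^\g Z}=\sum_{e\in\g_0}\sum_{z\in Z_e}1_e\delta_z$ and reindexing $\sum_{z\in Z_e}\sum_{x\in I_{e,z}}1_e\delta_x=\sum_{x\in X_e}1_e\delta_x$ gives $\sum_{e\in\g_0}\sum_{x\in X_e}1_e\delta_x=1_{A\#_\alpha^\g X}$.

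For the last assertion I would use the fibres once more. If $\varphi$ is injective, each $I_{g,z}$ is empty or a singleton; given a generator $a_g\delta_x$ of $A\#_\alpha^\g X$, choosing $z=\varphi(x)$ gives $\varphi^*(a_g\delta_z)=a_g\delta_x$, so $\varphi^*$ is surjective. If $\varphi$ is surjective, splitness (as in the first paragraph) shows $\varphi$ restricts to a surjection $X_{d(g)}\to Z_{d(g)}$; writing a putative kernel element $u=\sum_g\sum_{z\in Z_{d(g)}}a_g^{(z)}\delta_z$ in the direct-sum basis, the coefficient of $A_g\delta_x$ in $\varphi^*(u)$ is $a_g^{(\varphi(x))}$, so $\varphi^*(u)=0$ forces $a_g^{(z)}=0$ for every $z\in Z_{d(g)}$, i.e. $u=0$, and $\varphi^*$ is injective. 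The only real obstacle throughout is keeping the index sets $X_{d(g)}$, $Z_{d(g)}$ and the fibres $I_{g,z}$ straight; once the intertwining relation $\varphi\circ\alpha_h=\beta_h\circ\varphi$ is in place, all four verifications are routine.
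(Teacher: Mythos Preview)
Your proof is correct and follows essentially the same approach as the paper: the heart of both arguments is the biconditional $\alpha_h(x)\in I_{g,z}\Longleftrightarrow \beta_h(w)=z$, obtained from the intertwining relation $\varphi\circ\alpha_h=\beta_h\circ\varphi$, which makes the multiplicativity check go through. You are in fact more thorough than the paper, which verifies only multiplicativity explicitly and leaves preservation of the identity and the injectivity/surjectivity claims to the reader.
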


\begin{proof}
Take $a_g\delta_y, b_h\delta_z \in A \#^{\g}_{\beta} Z$ such that $d(g)=r(h)$. Suppose that $I_{g,y} \neq \emptyset$ and $I_{h,z} \neq \emptyset$. Then
\begin{align*}
\varphi^*(a_g\delta_y)\varphi^*(b_h\delta_z)=\sum\limits_{x \in I_{g,y}} \sum\limits_{w \in I_{h,z}} (a_g\delta_x)(b_h\delta_w).
\end{align*}
Notice that $\alpha_h(w) \in I_{g,y}$ if and only if $y=\beta_h(z)$. In fact, if $\alpha_h(w) \in I_{g,y}$, then $y=\varphi(\alpha_h(w))=\beta_h(\varphi(w))=\beta_h(z)$. Conversely, observe that $\alpha_h(w) \in X_{r(h)}=X_{d(g)}$. If $y=\beta_h(z)$, then $y=\beta_h(\varphi(w))=\varphi(\alpha_h((w))$, and consequently $\alpha_h(w) \in I_{g,y}$. Therefore
\begin{align*}
\varphi^*(a_g\delta_y)\varphi^*(b_h\delta_z)&=
\left\{ \begin{array}{rl}
\sum\limits_{w \in I_{h,z}}(a_g\delta_{\alpha_h(w)})(b_h\delta_w), & \mbox{ if } \alpha_h(w) \in I_{g,y} \\
0, & \mbox{ otherwise}
\end{array} \right.\\
&=
\left\{ \begin{array}{rl}
\sum\limits_{w \in I_{gh,z}}a_gb_h\delta_w, & \mbox{ if } \beta_h(z)=y \\
0, & \mbox{ otherwise}
\end{array} \right.
 \\
&= \varphi^*((a_g\delta_y)(b_h\delta_z)).
\end{align*}
Clearly, if $I_{h,z} = \emptyset$ or $d(g) \neq r(h)$, then $\varphi^*(a_g\delta_y)\varphi^*(b_h\delta_z)=0=\varphi^*((a_g\delta_y)(b_h\delta_z))$. Finally, if $I_{g,y}= \emptyset$ and $I_{h,z} \neq \emptyset$, then $\beta_h(z) \neq y$. In fact, if $\beta_h(z)=y$, then $y=\beta_h(\varphi(w'))=\varphi(\alpha_h(w'))$, which is a contradiction.
\end{proof}

%%%%%%%%%%%%%%%%%%%%%%%%%%%%%%%%%%%%%%%%%%%%%%%%%%%%%%%%%%%%%%%%%%%%%%%%%%%%%%%%%%%%%%%%
\section{Duality Theorem} 

%\subsection{ $(\g_{\alpha},\K_{\beta})$-sets and the action of $\K$ on $A \#_{\alpha}^{\g}X$} 
In this section we introduce the notion of $(\g_{\alpha},\K_{\beta})$-set,  which allows us to define the skew groupoid ring $(A\#_{\alpha}^{\g} X)*_{\gamma} \K$, which is one of the main objects of study of this paper. Throughout this section, $A$ denotes a $\g$-graded algebra.

\vu

\begin{defin}\label{def37}
Let $\g$ and $\K$ be groupoids. The set $X$ is a {\it{$(\g_{\alpha},\K_{\beta})$-set}} if $X$ is a $\g$-set via $\alpha = (X_g, \alpha_g)_{g \in \g}$, $X$ is a $\K$-set via $\beta = (Y_k , \beta_k )_{k \in \K}$ and the following conditions are  satisfied,  for all $g \in \g$ and $k \in \K$:
\begin{itemize}
\item[(i)] $\alpha_g(X_{g^{-1}} \cap Y_k) \subseteq X_g \cap Y_k$  and $\beta_k(X_g \cap Y_{k^{-1}}) \subseteq X_g \cap Y_k$,
\item[(ii)] $\alpha_g(\beta_k (x))= \beta_k(\alpha_g(x))$, for  all $x \in X_{g^{-1}} \cap Y_{k^{-1}}$.
\end{itemize}
Moreover, if $X$ is a split $\g$-set and a split $\K$-set, then $X$ is said to be a {\it{split $(\g_{\alpha},\K_{\beta})$-set}}.
\end{defin}

Observe that item (i)  means that $Y_k$ is $\alpha$-invariant, for all $k \in \K$, and $X_g$ is $\beta$-invariant, for all $g \in \g$.

\begin{ex}
Every groupoid $\g$ is a split $(\g_{\alpha},\g_{\beta})$-set via the actions $\alpha$ and $\beta$ given in Examples \ref{ex1} and \ref{ex2}, respectively.% $\alpha=(R_{r(g)}, \alpha_g)_{g \in \g}$, where $\alpha_g(h)=gh$, for all $g \in \g$ and $h \in R_{d(g)}$, and $\beta=(D_{r(g)}, \alpha_g)_{g \in \g}$, where $\beta_g(h)=hg^{-1}$, for all $g \in \g$ and $h \in D_{d(g)}$.
\end{ex}

\begin{lema} \label{xkgconj}
Let $X$ be a $(\g_{\alpha}, \K_{\beta})$-set via $\alpha = (X_g, \alpha_g)_{g \in \g}$ and $\beta = (Y_k , \beta_k )_{k \in \K}$. Then, $Y_k$ is a $\g$-set and $\beta_k: Y_{k^{-1}} \to Y_k$ is a isomorphism of $\g$-sets, for all $k \in \K$. Furthermore, if $X$ is a split $\g_{\alpha}$-set, then $Y_k$ is a split $\g$-set. 
\end{lema}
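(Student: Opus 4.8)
The plan is to equip $Y_k$ with the $\g$-action obtained by restricting $\alpha$, and then to read the two remaining assertions directly off the two compatibility axioms of Definition \ref{def37}. First I would set $(Y_k)_g := Y_k \cap X_g$ for each $g \in \g$; since $X_g = X_{r(g)}$ this gives $(Y_k)_g = (Y_k)_{r(g)}$, so the pieces are indexed by ranges as a $\g$-set requires. I would then take the $g$-component of the action to be the restriction $\alpha_g|_{(Y_k)_{g^{-1}}}$. That this map sends $(Y_k)_{g^{-1}} = Y_k \cap X_{g^{-1}}$ into $(Y_k)_g = Y_k \cap X_g$ is exactly the first inclusion in item (i) of Definition \ref{def37}, equivalently the $\alpha$-invariance of $Y_k$ noted in the Remark after the definition.

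The one step that is not a pure restriction, and which I expect to be the only mildly delicate point, is checking that this restricted map is a bijection onto $(Y_k)_g$ rather than merely an injection into it. For this I would first record that $\alpha_{g^{-1}} = \alpha_g^{-1}$, which comes from axiom (ii) of a $\g$-set applied to the composable pairs $(g,g^{-1})$ and $(g^{-1},g)$. Applying invariance to $g^{-1}$ gives $\alpha_{g^{-1}}\big((Y_k)_g\big) \subseteq (Y_k)_{g^{-1}}$, so the restriction of $\alpha_{g^{-1}}$ is a two-sided inverse of the restriction of $\alpha_g$; hence the latter is a bijection. The two $\g$-set axioms for this restricted structure — that each identity acts as the identity and that $\alpha_g \circ \alpha_h = \alpha_{gh}$ on the appropriate domain — then follow at once by restricting the corresponding identities for $\alpha$. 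This establishes that $Y_k$ is a $\g$-set, for every $k \in \K$.

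Next, to see that $\beta_k : Y_{k^{-1}} \to Y_k$ is an isomorphism of $\g$-sets, I would verify the two conditions in the definition of a morphism of $\g$-sets against the structures just built. Condition (i), $\beta_k\big((Y_{k^{-1}})_g\big) \subseteq (Y_k)_g$, is literally the inclusion $\beta_k(X_g \cap Y_{k^{-1}}) \subseteq X_g \cap Y_k$ from item (i) of Definition \ref{def37}. Condition (ii) asks that $\beta_k$ intertwine the two restricted $\g$-actions; since both are restrictions of $\alpha_g$, this unwinds to $\beta_k(\alpha_g(x)) = \alpha_g(\beta_k(x))$ for $x \in Y_{k^{-1}} \cap X_{g^{-1}}$, which is precisely item (ii) of Definition \ref{def37}. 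As $\beta_k$ is a bijection, being a component of the $\K$-action, it is therefore an isomorphism of $\g$-sets.

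Finally, for the split claim, if $X = \coprod_{e \in \g_0} X_e$ then intersecting with $Y_k$ yields $Y_k = \coprod_{e \in \g_0} (Y_k \cap X_e) = \coprod_{e \in \g_0} (Y_k)_e$, the disjointness being inherited from that of the $X_e$; hence $Y_k$ is a split $\g$-set. Apart from the bijectivity step above, every verification is a direct restriction of the corresponding fact for $\alpha$ and $\beta$, so I would expect the write-up to be short.
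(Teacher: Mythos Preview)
Your proof is correct and follows essentially the same approach as the paper: define $(Y_k)_g = Y_k \cap X_g$ (the paper's $Z^k_g$), restrict $\alpha_g$ to obtain the $\g$-action (the paper's $\theta^k_g$), and then read both morphism conditions for $\beta_k$ directly off items (i) and (ii) of Definition~\ref{def37}. You are simply more explicit than the paper in verifying bijectivity of the restricted maps and in writing out the split claim, both of which the paper leaves implicit.
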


\begin{proof} Fix $k \in \K$. For each $g \in \g$, $\alpha_g(Y_k \cap X_{g^{-1}}) \subseteq Y_k \cap X_g$. So, taking $Z^{k}_g= Y_k \cap X_g$ and $\theta^k_g=\alpha_g|_{Z^{k}_{g^{-1}}}$ is easy to see that $\theta^k=(Z^k_g,\theta^k_g)_{g \in \g}$ determines an action of $\g$ on $Y_k$. Now, observe that $\beta_k$ is a morphism of $\g$-sets. Indeed, $\beta_{k}(Z^{k^{-1}}_g)=\beta_{k}(Y_{k^{-1}} \cap X_g) \subseteq Y_k \cap X_g=Z^k_g$, for all $g \in \g$. Moreover, $\beta_{k}(\theta^{k^{-1}}_g(x))=\beta_{k}(\alpha_g(x))=\alpha_g(\beta_{k}(x))=\theta^{k}_g(\beta_{k}(x))$, for all $g \in \g$ and $x \in Z^{k^{-1}}_{g^{-1}}$.  %Therefore $\beta_k$ is a morphism of $\g$-sets. 
\end{proof}

In the following proposition we present an action $\gamma$ of the groupoid $\K$ on the algebra $A \#_{\alpha}^{\g} X$. Thus we can consider the skew groupoid ring $(A \#_{\alpha}^{\g} X)*_{\gamma} \K $. 

\vu

\begin{prop} \label{skew}
Let $\g$ and $\K$ be groupoids such that $\g_0$ is finite, $X$ a finite split $(\g_{\alpha}, \K_{\beta})$-set via $\alpha = (X_g, \alpha_g)_{g \in \g}$ and $\beta = (Y_k , \beta_k )_{k \in \K}$ and $A$ a $\g$-graded algebra. Then, there exists an action $\gamma=(E_k,\gamma_k)_{k \in \K}$ of $\K$ on $A \#_{\alpha}^{\g} X$ such that $E_k$ is unital, for all $k \in \K$, and $A \#_{\alpha}^{\g} X=\oplus_{p \in \K_0}E_p$.
\end{prop}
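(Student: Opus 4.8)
The plan is to construct the ideals $E_k$ and the isomorphisms $\gamma_k$ explicitly and then verify the two axioms (i)--(ii) of a groupoid action on an algebra from Subsection 2.2. For each $k \in \K$ I would set
\begin{align*}
E_k = \bigoplus_{g \in \g} \bigoplus_{x \in X_{d(g)} \cap Y_k} A_g \delta_x,
\end{align*}
that is, the subspace of $A \#_{\alpha}^{\g} X$ spanned by those $a_g \delta_x$ whose $X$-component $x$ lies in $Y_k = Y_{r(k)}$. The first task is to check $E_k = E_{r(k)}$: this is immediate since $Y_k = Y_{r(k)}$ by definition of the $\K$-action $\beta$, so the defining condition $x \in Y_k$ only sees $r(k)$. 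Next I would verify that $E_k$ is an ideal of $A \#_{\alpha}^{\g} X$. Using the multiplication rule $(a_g\delta_x)(b_h\delta_y)=a_gb_h\delta_y$ when $\alpha_h(y)=x$, a product landing in a given component $\delta_y$ forces its $X$-label to be $y$; the $\beta$-invariance of $X_g$ (equivalently, the $\alpha$-invariance of $Y_k$ from Definition \ref{def37}(i)) guarantees that multiplying an element of $E_k$ by an arbitrary generator stays inside the span indexed by $Y_k$, so $E_k$ is a two-sided ideal. That each $E_p$ for $p \in \K_0$ is unital follows because $X = \coprod_{p \in \K_0} Y_p$ (split $\K$-set), whence the global identity $\sum_{e \in \g_0}\sum_{x \in X_e} 1_e\delta_x$ decomposes as $\sum_{p \in \K_0}\bigl(\sum_{e \in \g_0}\sum_{x \in X_e \cap Y_p} 1_e\delta_x\bigr)$, and the $p$-th summand is a central idempotent serving as identity of $E_p$; simultaneously this exhibits the orthogonal decomposition $A \#_{\alpha}^{\g} X = \bigoplus_{p \in \K_0} E_p$.

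For the isomorphisms I would define $\gamma_k \colon E_{k^{-1}} \to E_k$ on generators by
\begin{align*}
\gamma_k(a_g\delta_x) = a_g\,\delta_{\beta_k(x)},
\end{align*}
for $a_g\delta_x \in E_{k^{-1}}$, i.e.\ for $x \in X_{d(g)} \cap Y_{k^{-1}}$, and extend linearly. This is well posed because $\beta_k$ is a bijection $Y_{k^{-1}} \to Y_k$ and, by Definition \ref{def37}(i), it sends $X_{d(g)} \cap Y_{k^{-1}}$ into $X_{d(g)} \cap Y_k$, so the image lies in $E_k$; its inverse is visibly $\gamma_{k^{-1}}$. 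The bulk of the verification is that $\gamma_k$ is an algebra homomorphism. Applying $\gamma_k$ to a nonzero product $(a_g\delta_x)(b_h\delta_y)=a_gb_h\delta_y$ (with $\alpha_h(y)=x$) gives $a_gb_h\,\delta_{\beta_k(y)}$, while the product of the images $(a_g\delta_{\beta_k(x)})(b_h\delta_{\beta_k(y)})$ is nonzero and equal to $a_gb_h\,\delta_{\beta_k(y)}$ precisely when $\alpha_h(\beta_k(y))=\beta_k(x)$. Here is where the commutation axiom Definition \ref{def37}(ii) enters: from $\alpha_h(y)=x$ we get $\beta_k(\alpha_h(y))=\beta_k(x)$, and $\alpha_h(\beta_k(y))=\beta_k(\alpha_h(y))$ by (ii), so the two conditions match and the products agree. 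I expect this compatibility check to be the main obstacle, since one must track carefully that the domains coincide ($X_{d(h)}\cap Y_{k^{-1}}$ versus $X_{d(h)}\cap Y_k$) and that the ``otherwise $=0$'' branches of the multiplication are respected under $\gamma_k$; the key point is that $\beta_k$ being a bijection ensures $\alpha_h(y)=x \iff \alpha_h(\beta_k(y))=\beta_k(x)$, so products vanish on one side exactly when they vanish on the other.

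Finally I would confirm the two groupoid-action axioms. Axiom (i), $\gamma_e = \mathrm{id}_{E_e}$ for $e \in \K_0$, is immediate from $\beta_e = \mathrm{id}_{Y_e}$. Axiom (ii), the cocycle condition $\gamma_{k\ell} = \gamma_k \circ \gamma_\ell$ on $E_{(k\ell)^{-1}} = E_{\ell^{-1}}$ for composable $(k,\ell)$, reduces on a generator $a_g\delta_x$ to the identity $\delta_{\beta_{k\ell}(x)} = \delta_{\beta_k(\beta_\ell(x))}$, which holds by axiom (ii) of the $\K$-action $\beta$. This completes the construction of $\gamma=(E_k,\gamma_k)_{k\in\K}$ as an action of $\K$ on $A\#_{\alpha}^{\g}X$ with the stated properties.
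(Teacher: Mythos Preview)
Your proposal is correct and defines exactly the same action as the paper: your $E_k$ coincides with the paper's $A \#_{\theta^k}^{\g} Y_k$ and your formula $\gamma_k(a_g\delta_x)=a_g\delta_{\beta_k(x)}$ is precisely $\beta^*_{k^{-1}}$ once one unwinds Theorem~\ref{morfismosmash} for the bijection $\beta_{k^{-1}}$. The only difference is packaging: the paper obtains the ideal and algebra-isomorphism claims by recognizing $E_k$ as a smash product over the $\g$-set $Y_k$ (Lemma~\ref{xkgconj}) and invoking the functoriality of Theorem~\ref{morfismosmash}, whereas you verify the multiplicativity of $\gamma_k$ directly from the commutation axiom in Definition~\ref{def37}(ii).
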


\begin{proof} 
Since $X$ is a finite split $\g$-set, then by Lemma \ref{xkgconj} $Y_k$ is a finite split $\g$-set, for all $k \in \K$. Thus
\begin{align*}
A \#_{\theta^{r(k)}}^{\g} Y_{r(k)}=A \#_{\theta^k}^{\g} Y_k=\oplus_{g \in \g} \oplus_{x \in Z_{d(g)}^k}A_g\delta_x= \oplus_{g \in \g} \oplus_{x \in Y_k \cap X_{d(g)}}A_g\delta_x,
\end{align*}
is an ideal of $A \#_{\alpha}^{\g} X$ with unity given by ${\bf 1}_k=\sum_{e \in \g_0}\sum_{x \in Y_k \cap X_e}1_e\delta_x$. Indeed, let $a_g \delta_x \in A \#_{\alpha}^{\g} X$, $x \in X_{d(g)}$, and $b_h \delta_y \in A \#_{\theta^k}^{\g} Y_k$, $y \in Y_k \cap X_{d(h)}$. If $d(h)=r(g)$ and $\alpha_g(x)=y$, then $(b_h\delta_y)(a_g\delta_x)=b_ha_g\delta_x$.
Hence,  $x=\alpha_{g^{-1}}(y) \in \alpha_{g^{-1}}(Y_k \cap X_{r(g)}) \subseteq Y_k \cap X_{d(g)}= Y_k \cap X_{d(hg)}$. If $d(g) \neq r(h)$ or $\alpha_g(x) \neq y$, then $(b_h\delta_y)(a_g\delta_x)=0$. So, $A \#_{\theta^k}^{\g} Y_k$ is a right ideal of $A \#_{\alpha}^{\g} X$. Similarly, $A \#_{\theta^k}^{\g} Y_k$ is a left ideal of $A \#_{\alpha}^{\g} X$.

\vu

By Lemma \ref{xkgconj}, $\beta_{k^{-1}}: Y_k \to Y_{k^{-1}}$ is an isomorphism of $\g$-sets, for all $k \in \K$. By Theorem \ref{morfismosmash}, $\beta^*_{k^{-1}}: A \#_{\theta^{k^{-1}}}^{\g}Y_{k^{-1}} \to A \#_{\theta^k}^{\g} Y_k$ is an isomorphism of algebras. Taking $E_k=A \#_{\theta^k}^{\g} Y_k$ and $\gamma_k=\beta^*_{k^{-1}}$, for all $k \in \K$, is easy to see that $\gamma=(E_k,\gamma_k)_{k \in \K}$ is an action of $\K$ on $A \#_{\alpha}^{\g} X$ such that $A \#_{\alpha}^{\g} X = \oplus_{p \in \K_0} E_p$.
\end{proof}

%\begin{obs}
%Let $S$ be a class of the equivalence relation defined on $\g_{0}$ in \eqref{rel-connected}. The following statements hold:
%\begin{itemize}
% \item[(i)] if $X$ is a $\g$-set via $\alpha = (X_g,\alpha_g)_{g \in \g}$, then $X$ is $\g_{S}$-set via $\alpha_{S} = (X_g,\alpha_g)_{g \in \g_S}$ and $A \#_{\alpha}^{\g} X=\oplus_{S \in \g_{0}/\sim} A \#_{\alpha_{S}}^{\g_{S}} X$,
 
% \item[(ii)] if $\gamma=(E_g,\gamma_g)_{g \in \g}$ is an action of $\g$ on $A$, then $\gamma_S=(E_g,\gamma_g)_{g \in \g_S}$ is an action of $\g_S$ on $A$ and $A \ast_{\gamma}\g=\oplus_{S \in \g_{0}/\sim}A \ast_{\gamma_S}\g_S$.
%\end{itemize} 
%Hence, $(A \#_{\alpha}^{\g} X) \ast_{\gamma}\K= \oplus_{S,T \in \g_{0}/\sim}(A \#_{\alpha_S}^{\g_S} X) \ast_{\gamma_T}\K_T$ and thus we can consider only the case which the groupoids involved are connected.
%\end{obs}

%%%%%%%%%%%%%%%%%%%%%%%%%%%%%%%%%%%%%%%%%%%%%%%%%%%%%%%%%%%%%%%%%%%%%%%%%%%%%%%%%%%%%%%%%%%%%%%%%%%%%%%%%%
%\subsection{Duality Theorem}

Now, we present a theorem of duality in the sense of \cite{a1}. Precisely, we show that there exists an isomorphism of algebras from the skew groupoid ring $(A\#_{\alpha}^{\g} X)*_{\gamma} \K$ to a particular algebra of endomorphisms. The following results describe this algebra.

\begin{lema} \label{orbita}
If $X$ is a $(\g_{\alpha},\K_{\beta})$-set via $\alpha = (X_g, \alpha_g)_{g \in \g}$ and $\beta = (Y_k , \beta_k )_{k \in \K}$, such that $X$ is a split $\K$-set, then $O^{\K}$ is a $\g$-set and $\varphi: X \to O^{\K}$, given by $\varphi(x)=o(x)$, for all $x \in X$, is an epimorphism of $\g$-sets.
\end{lema}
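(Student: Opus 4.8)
The plan is to transport the $\g$-action on $X$ to the set $O^{\K}$ of $\K$-orbits, using the commutativity of the two actions to guarantee that this transport is well defined. Writing $o(x)$ for the $\K$-orbit of $x$, I would set $(O^{\K})_{e}:=\varphi(X_{e})=\{o(x):x\in X_{e}\}$ for each $e\in\g_{0}$, put $(O^{\K})_{g}:=(O^{\K})_{r(g)}$ for $g\in\g$, and define
\[
\overline{\alpha}_{g}\colon (O^{\K})_{g^{-1}}\longrightarrow (O^{\K})_{g},\qquad \overline{\alpha}_{g}(o(x))=o(\alpha_{g}(x)),
\]
for a representative $x\in X_{g^{-1}}=X_{d(g)}$. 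Since $(O^{\K})_{g^{-1}}=(O^{\K})_{d(g)}=\{o(x):x\in X_{d(g)}\}$, every class in the domain does admit such a representative, and $\alpha_{g}(x)\in X_{r(g)}$ shows that the image lands in $(O^{\K})_{g}$, so only well-definedness on classes is in question.

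The heart of the argument, and the step I expect to be the only real obstacle, is precisely this well-definedness: I must check that $\overline{\alpha}_{g}(o(x))$ does not depend on the chosen representative $x\in X_{d(g)}$. Suppose $o(x)=o(x')$ with $x,x'\in X_{d(g)}$; then $x'=\beta_{k}(x)$ for some $k\in\K$ with $x\in Y_{k^{-1}}$. Invariance of $Y_{k^{-1}}$ under $\alpha$ (item (i) of Definition \ref{def37}) gives $\alpha_{g}(x)\in Y_{k^{-1}}$, so $\beta_{k}(\alpha_{g}(x))$ is defined, and the commutativity of item (ii) yields
\[
\alpha_{g}(x')=\alpha_{g}(\beta_{k}(x))=\beta_{k}(\alpha_{g}(x)).
\]
Hence $\alpha_{g}(x')$ lies in the $\K$-orbit of $\alpha_{g}(x)$, i.e. $o(\alpha_{g}(x'))=o(\alpha_{g}(x))$, as needed. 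This is exactly where the compatibility between $\alpha$ and $\beta$ is indispensable; the $\beta$-invariance of each $X_{e}$ from item (i) additionally shows that a whole $\K$-orbit stays inside a single $X_{e}$, which is what makes the family $\{(O^{\K})_{e}\}$ behave coherently (and split, once $X$ is a split $\g$-set).

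It then remains to verify the $\g$-set axioms and the morphism conditions, all of which are routine once the above is in place. From $\alpha_{e}=id_{X_{e}}$ I get $\overline{\alpha}_{e}=id$, and from $\alpha_{g}\alpha_{h}=\alpha_{gh}$ on $X$ I get $\overline{\alpha}_{g}\overline{\alpha}_{h}=\overline{\alpha}_{gh}$ on compatible pairs; applying this with $h=g^{-1}$ shows each $\overline{\alpha}_{g}$ is a bijection with inverse $\overline{\alpha}_{g^{-1}}$, so $\theta=((O^{\K})_{g},\overline{\alpha}_{g})_{g\in\g}$ is an action of $\g$ on $O^{\K}$. Finally, $\varphi(x)=o(x)$ satisfies $\varphi(X_{g})\subseteq(O^{\K})_{g}$ by the very definition of $(O^{\K})_{r(g)}$, and $\varphi(\alpha_{g}(x))=o(\alpha_{g}(x))=\overline{\alpha}_{g}(o(x))=\overline{\alpha}_{g}(\varphi(x))$ for $x\in X_{g^{-1}}$, so $\varphi$ is a morphism of $\g$-sets; it is surjective because every element of $O^{\K}$ is of the form $o(x)$, whence $\varphi$ is an epimorphism.
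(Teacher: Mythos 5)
Your proposal is correct and follows essentially the same route as the paper: both transport the action via $o(x)\mapsto o(\alpha_g(x))$ on $O^{\K}_g=\{o(x):x\in X_g\}$ and reduce everything to the well-definedness check, which both arguments settle by writing a second representative as $\beta_k(x)$ and invoking the invariance and commutativity conditions of Definition \ref{def37}. The remaining verifications (action axioms, morphism conditions, surjectivity) are treated as routine in both.
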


\begin{proof} 
Let $x \in Y_p$, for some $p \in \K_0$. Recall that $o(x)=\{\beta_k(x): k \in D_p\}$. 
For each $g \in \g$, define $O_g^{\K} = \{o(x): x \in X_g \}$ and $\lambda_{g}: {O_{g^{-1}}^{\K}} \to O_g^{\K}$, such that $\lambda_g(o(x))=o(\alpha_g(x))$, for all $o(x) \in {O_{g^{-1}}^{\K}}$. Let $x,y\in X_{g^{-1}}$, then there exist unique $p,q \in \K_0$ such that $x \in Y_{p}$ and $y \in Y_{q}$. Suppose that $o(x)=o(y)$ in ${O_{g^{-1}}^{\K}}$, then $x=\beta_k(y)$, $k \in D_q$. Since $y \in X_{g^{-1}} \cap Y_{q}=X_{g^{-1}} \cap Y_{d(k)}=X_{g^{-1}} \cap Y_{k^{-1}}$, then $\alpha_g(x)=\alpha_g(\beta_k(y))=\beta_k(\alpha_g(y)) \in o(\alpha_g(y))$. Therefore $\lambda_g$ is well defined, for all $g \in \g$. Clearly, $\lambda = ({O_g^{\K}}, \lambda_g )_{g \in \g}$ is an action of $\g$ on $O^{\K}$ and $\varphi$ is an epimorphism of $\g$-sets.
\end{proof}

\vu

By Lemma \ref{orbita} and Theorem \ref{morfismosmash}, the map $\varphi^*: A \#_{\lambda}^{\g} O^{\K} \to A \#_{\alpha}^{\g} X$, given by $\varphi^*(a_g\delta_{o(x)})=\sum_{y \in I_{g,o(x)} }a_g\delta_y$, for all $a_g\delta_{o(x)} \in A \#_{\lambda}^{\g} O^{\K}$, is a monomorphism of algebras. In the following proposition, the image of $\varphi^*$ is described from  the action $\gamma$ of $\K$ on $A \#_{\alpha}^{\g} X$ given in Proposition \ref{skew}.

\vu

\begin{prop} \label{partefixa}
If $X$ is a $(\g_{\alpha},\K_{\beta})$-set via $\alpha = (X_g, \alpha_g)_{g \in \g}$ and $\beta = (Y_k , \beta_k )_{k \in \K}$, such that $X$ is a split $\K$-set, and $A$ is a $\g$-graded algebra, then $$\varphi^*(A \#_{\lambda}^{\g} O^{\K})=(A \#_{\alpha}^{\g} X)^{\gamma}.$$
\end{prop}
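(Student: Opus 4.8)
The plan is to compute both subspaces explicitly in terms of coefficients and check they coincide term by term. The first step is to make the action $\gamma$ concrete on elements. Since $\beta_{k^{-1}}\colon Y_k \to Y_{k^{-1}}$ is a \emph{bijective} morphism of $\g$-sets (Lemma \ref{xkgconj}), for each $g \in \g$ and $x \in Y_{k^{-1}}\cap X_{d(g)}$ the index set $I_{g,x}=\{y \in Y_k\cap X_{d(g)} : \beta_{k^{-1}}(y)=x\}$ appearing in Theorem \ref{morfismosmash} is the singleton $\{\beta_k(x)\}$, so that $\gamma_k(a_g\delta_x)=\beta^*_{k^{-1}}(a_g\delta_x)=a_g\delta_{\beta_k(x)}$. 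I would also record, directly from the multiplication rule and the shape of ${\bf 1}_k=\sum_{e\in\g_0}\sum_{w\in Y_k\cap X_e}1_e\delta_w$, that right multiplication by ${\bf 1}_k$ is the projection onto the $Y_k$-supported part: $(a_g\delta_x){\bf 1}_k=a_g\delta_x$ if $x\in Y_k$ and $0$ otherwise.

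Next I would translate the defining condition of the invariants $(A\#_{\alpha}^{\g}X)^{\gamma}=\{z : \gamma_k(z{\bf 1}_{k^{-1}})=z{\bf 1}_k,\ \forall k\in\K\}$ into a condition on coefficients. Writing a general element as $z=\sum_{g\in\g}\sum_{x\in X_{d(g)}}a_{g,x}\delta_x$ with $a_{g,x}\in A_g$, the previous step gives
\begin{align*}
\gamma_k(z{\bf 1}_{k^{-1}})=\sum_{g\in\g}\sum_{x\in Y_{k^{-1}}\cap X_{d(g)}}a_{g,x}\,\delta_{\beta_k(x)},
\qquad
z{\bf 1}_k=\sum_{g\in\g}\sum_{x'\in Y_k\cap X_{d(g)}}a_{g,x'}\,\delta_{x'}.
\end{align*}
Here I use that axiom (i) of a $(\g_{\alpha},\K_{\beta})$-set makes $\beta_k$ restrict to a bijection $Y_{k^{-1}}\cap X_{d(g)}\to Y_k\cap X_{d(g)}$, so after reindexing $x'=\beta_k(x)$ the two displays are supported on the same set of symbols $\delta_{x'}$. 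Comparing coefficients then shows that $z$ is $\gamma$-invariant if and only if $a_{g,\beta_{k^{-1}}(x')}=a_{g,x'}$ for all $k,g$ and all $x'\in Y_k\cap X_{d(g)}$; equivalently, for each fixed $g$ the coefficient $a_{g,x}$ depends only on the $\K$-orbit $o(x)$ of $x$ among the elements of $X_{d(g)}$.

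Finally I would identify the image of $\varphi^*$ with this same set. For a generator, $\varphi^*(a_g\delta_{o(x)})=\sum_{y\in I_{g,o(x)}}a_g\delta_y$ with $I_{g,o(x)}=\{y\in X_{d(g)}:o(y)=o(x)\}$, so every element of $\operatorname{Im}\varphi^*$ has, for each $g$, coefficients that are constant on each $\K$-orbit inside $X_{d(g)}$; this yields $\operatorname{Im}\varphi^*\subseteq(A\#_{\alpha}^{\g}X)^{\gamma}$. For the reverse inclusion, given an invariant $z$ I define, for each $g$ and each orbit $o\in O^{\K}_{d(g)}$, the element $a_{g,o}:=a_{g,x}$ for any $x\in X_{d(g)}$ with $o(x)=o$; the invariance condition established above is exactly what makes this independent of the chosen representative $x$. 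Then $w:=\sum_{g}\sum_{o\in O^{\K}_{d(g)}}a_{g,o}\delta_o$ lies in $A\#_{\lambda}^{\g}O^{\K}$ and $\varphi^*(w)=z$, giving equality.

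The routine bookkeeping is the reindexing of the double sum under $\gamma_k$ and the identity $\beta_k(Y_{k^{-1}}\cap X_{d(g)})=Y_k\cap X_{d(g)}$; the one genuinely delicate point, and the step I expect to need the most care, is the well-definedness in the converse inclusion: I must ensure that requiring $\gamma_k(z{\bf 1}_{k^{-1}})=z{\bf 1}_k$ for \emph{all} $k\in\K$ forces $a_{g,x}$ to be constant across the \emph{entire} orbit $I_{g,o(x)}$, which amounts to checking that the collection of maps $\beta_k$ (with $k$ ranging over the relevant isotropy and connecting morphisms) acts transitively on the representatives of $o(x)$ that lie in $X_{d(g)}$, precisely the data encoded by the set $I_{g,o(x)}$.
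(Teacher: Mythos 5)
Your proposal is correct and follows essentially the same route as the paper: both reduce the invariance condition $\gamma_k(z{\bf 1}_{k^{-1}})=z{\bf 1}_k$ to the statement that, for each $g$, the coefficients $a_{g,x}$ are constant on $\K$-orbits inside $X_{d(g)}$, and then build a preimage in $A\#_{\lambda}^{\g}O^{\K}$ from orbit representatives (the paper uses a transversal $T$ of $\approx_{\K}$ where you pick an arbitrary representative, which is the same device). The "delicate point" you flag resolves immediately: if $y,y'\in X_{d(g)}$ lie in the same orbit then $y'=\beta_k(y)$ for a single $k$ with $y\in Y_{k^{-1}}$, so one application of the invariance condition already identifies the coefficients.
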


\begin{proof}
Let $a_g\delta_{o(x)} \in A \#_{\lambda}^{\g} O^{\K}$. For all $k \in \K$
\begin{align*}
\gamma_k(\varphi^*(a_g\delta_{o(x)}){\bf 1}_{k^{-1}}) &=\gamma_k \bigg(\bigg(\sum\limits_{y \in I_{g,o(x)}} a_g\delta_y \bigg) \bigg(\sum\limits_{e \in \g_0}\sum\limits_{z \in Y_{k^{-1}} \cap X_e}1_e\delta_{z} \bigg)\bigg) \\
&=\gamma_k \bigg(\sum\limits_{y \in I_{g,o(x)}}\sum\limits_{z \in  Y_{k^{-1}} \cap X_{d(g)}} (a_g\delta_y)(1_{d(g)}\delta_{z})\bigg)\\
&=\gamma_k \bigg(\sum\limits_{y \in I_{g,o(x)} \cap Y_{k^{-1}} }a_g\delta_y \bigg) \\
&=\sum\limits_{y \in I_{g,o(x)} \cap Y_{k^{-1}} }a_g\delta_{\beta_k(y)}.
\end{align*}

On the other hand,
\begin{align*}
\varphi^*(a_g\delta_{o(x)}){\bf 1}_{k} &= \bigg( \sum\limits_{y \in I_{g,o(x)}} a_g\delta_y \bigg)\bigg(\sum\limits_{e \in \g_0}\sum\limits_{z \in Y_k \cap X_e}1_e\delta_{z} \bigg) = \sum\limits_{z \in I_{g,o(x)} \cap Y_k } a_g\delta_z.
\end{align*}
Since $\beta_k: Y_{k^{-1}} \to Y_k$ is a bijection and $z \in Y_k \cap X_{d(g)}$, then there exists unique $y \in Y_{k^{-1}}$ such that $\beta_k(y)=z$, so $o(z)=o(y)$. Moreover, as $X$ is a $(\g_{\alpha},\K_{\beta})$-set, then $y=\beta_{k^{-1}}(z) \in \beta_{k^{-1}}(Y_k \cap X_{d(g)}) \subseteq Y_{k^{-1}} \cap X_{d(g)}$. Hence $\varphi^*(a_g\delta_{o(x)}){\bf 1}_{k}=\gamma_k(\varphi^*(a_g\delta_{o(x)}){\bf 1}_{k^{-1}})$, for all $k \in \K$. Therefore $\varphi^*(A \#_{\lambda}^{\g} O^{\K}) \subseteq (A\#_{\alpha}^{\g} X)^{\gamma}$.

\vu

Conversely let $w=\sum_{g \in \g} \sum_{x \in X_{d(g)}}a^x_g\delta_x \in (A \#_{\alpha}^{\g} X)^{\gamma}$, where $a_g^x \in A_g$, for all $g \in \g$ and $x \in X_{d(g)}$. Since $\gamma_k(w{\bf 1}_{k^{-1}})=w{\bf 1}_k$, for all $k \in \K$, then
\begin{align*}
\sum\limits_{g \in \g} \sum\limits_{y \in Y_{k^{-1}} \cap X_{d(g)}}a^y_g\delta_{\beta_k(y)}=\sum\limits_{g \in \g} \sum\limits_{z \in Y_k \cap X_{d(g)}}a^z_g\delta_z.
\end{align*}
If $z=\beta_k(y)$, then $a^y_g=a^z_g$, for all $g \in \g$.  For each $g \in \g$, let $T_{d(g)}=T \cap X_{d(g)}$, where $T$ is a transversal of the equivalence relation $\approx_{\K}$ defined in \eqref{rel_orbit}. Let $x \in I_{g,o(y)}$, where $y \in T_{d(g)}$. Then $x=\beta_t(y)$, for some $t \in \K$ such that $y \in Y_{t^{-1}}$ and $a^{y}_g=a^x_g=a^{\beta_t(y)}_g$. Take $\bar{w}=\sum_{g \in \g}\sum_{y \in T_{d(g)}}a^{y}_g\delta_{o(y)}  \in A \#_{\lambda}^{\g} O^{\K}$. 
Note that for each $y \in T_{d(g)}$, we have that $\varphi^*(a_g^{y}\delta_{o(y)})=\sum_{x \in I_{g,o(y)}}a_g^{y}\delta_x=\sum_{x \in I_{g,o(y)}}a_g^x\delta_x$. Whence
\begin{align*}
\varphi^*(\bar{w})= \sum\limits_{g \in \g}\sum\limits_{y \in T_{d(g)}} \varphi^*(a^{y}_g\delta_{o(y)})=\sum\limits_{g \in \g}\sum\limits_{y \in T_{d(g)}} \sum\limits_{x \in I_{g,o(y)}}a_g^x\delta_x = \sum\limits_{g \in \g} \sum\limits_{x \in X_{d(g)}}a^x_g\delta_x = w,
\end{align*}
since $X_{d(g)}=\coprod_{y \in T_{d(g)}}I_{g,o(y)}$. Therefore $(A \#_{\alpha}^{\g} X)^{\gamma}=\varphi^*(A \#_{\lambda}^{\g} O^{\K})$.
\end{proof}

Consequently, $A \#_{\lambda}^{\g} O^{\K} \simeq \varphi^*(A \#_{\lambda}^{\g} O^{\K}) =(A \#_{\alpha}^{\g} X)^{\gamma}$.

\begin{defin}
Let $\mathcal{L}$ be a groupoid and $X$ a $\mathcal{L}$-set via $\vartheta = (X_l,\vartheta_l)_{l \in \mathcal{L}}$. The action $\vartheta$ is fully faithful if $\vartheta_l(x)=x$, for some $x \in X_l \cap X_{l^{-1}}$, implies that $l \in \mathcal{L}_0$.
\end{defin}

Let $\vartheta$ be an action fully faithful of  $\mathcal{L}$ on $X$. If $\mathcal{L}_0$ and $X$ are finite, then it is easy to verify that $\mathcal{L}$ is finite. 

\begin{teo} \label{dualidade}
Let $\g$ and $\K$ be groupoids such that $\g_0$ and $\K_0$ are finite, $X$ a finite split $(\g_{\alpha}, \K_{\beta})$-set and $A$ a $\g$-graded algebra. If the action of $\K$ on $X$ is fully faithful, then 
\begin{align*}
(A \#_{\alpha}^{\g} X)*_{\gamma} \K \simeq End({A \#_{\alpha}^{\g} X}_{A \#_{\lambda}^{\g} O^{\K}}).
\end{align*}
\end{teo}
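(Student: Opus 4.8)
The plan is to reduce the statement to a Galois-theoretic fact. Write $B = A\#_{\alpha}^{\g} X$ for brevity. By Proposition \ref{partefixa} and the remark following it, the fixed subalgebra $B^{\gamma}$ coincides with $\varphi^*(A\#_{\lambda}^{\g} O^{\K}) \simeq A\#_{\lambda}^{\g} O^{\K}$, so $B$ becomes a right $A\#_{\lambda}^{\g} O^{\K}$-module through $\varphi^*$ and it suffices to produce an algebra isomorphism $B *_{\gamma} \K \simeq End(B_{B^{\gamma}})$. I would obtain this from the standard characterization of Galois extensions in the groupoid setting: once $B$ is shown to be a $\gamma$-Galois extension of $B^{\gamma}$, the map $\Psi\colon B*_{\gamma} \K \to End(B_{B^{\gamma}})$ given by $\Psi(b\delta_k)(w) = b\,\gamma_k(w{\bf 1}_{k^{-1}})$ is an isomorphism.

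The heart of the argument is to exhibit a Galois coordinate system for $B$ over $B^{\gamma}$. I claim that the finite family of orthogonal idempotents $\{1_e\delta_x : e \in \g_0,\ x \in X_e\}$, taken simultaneously as the $x_i$ and as the $y_i$, does the job. Using the description of $\gamma$ from the proof of Proposition \ref{partefixa}, one has $\gamma_k((1_e\delta_x){\bf 1}_{k^{-1}}) = 1_e\delta_{\beta_k(x)}$ when $x \in Y_{k^{-1}}$ and $0$ otherwise. Multiplying on the left by $1_e\delta_x$ and using $\alpha_e = id$ in the multiplication rule of the smash product yields $(1_e\delta_x)\,\gamma_k((1_e\delta_x){\bf 1}_{k^{-1}}) = 1_e\delta_x$ precisely when $x \in Y_{k^{-1}}$ and $\beta_k(x) = x$, and $0$ otherwise. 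Summing over all admissible pairs $(e,x)$ gives $\sum_{e,x}(1_e\delta_x)\,\gamma_k((1_e\delta_x){\bf 1}_{k^{-1}}) = \sum_{\{(e,x):\, x\in Y_{k^{-1}},\ \beta_k(x)=x\}} 1_e\delta_x$.

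Here the fully faithful hypothesis is decisive. If $k \in \K_0$, then $\beta_k = id$ on $Y_k = Y_{k^{-1}}$ and the sum collapses to $\sum_{e \in \g_0}\sum_{x \in Y_k\cap X_e}1_e\delta_x = {\bf 1}_k$. If $k \notin \K_0$, any $x$ contributing to the sum satisfies $\beta_k(x)=x$ with $x \in Y_{k^{-1}}$ and $\beta_k(x)\in Y_k$, hence $x \in Y_k\cap Y_{k^{-1}}$; fully faithfulness of the action of $\K$ then forces $k \in \K_0$, a contradiction, so the index set is empty and the sum is $0$. Thus $\sum_{e,x}(1_e\delta_x)\,\gamma_k((1_e\delta_x){\bf 1}_{k^{-1}}) = \delta_{p,k}\,{\bf 1}_p$ for every $p\in\K_0$ and $k\in\K$, which is exactly the Galois condition; hence $B$ is a $\gamma$-Galois extension of $B^{\gamma}$.

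Finally I would conclude through the Galois duality. With the coordinate system in hand, $\Psi$ is checked to be a well-defined algebra homomorphism: right $B^{\gamma}$-linearity of each $\Psi(b\delta_k)$ follows from $\gamma_k(c{\bf 1}_{k^{-1}}) = c{\bf 1}_k$ for $c \in B^{\gamma}$, and multiplicativity follows from $\gamma_k\gamma_{k'} = \gamma_{kk'}$ on composable pairs together with the product rule of the skew groupoid ring. Injectivity and surjectivity are the usual consequences of the existence of a Galois coordinate system, the inverse sending $f \in End(B_{B^{\gamma}})$ to $\sum_{k\in\K}\big(\sum_i f(x_i)\,\gamma_k(y_i{\bf 1}_{k^{-1}})\big)\delta_k$; the finiteness of $\K$, guaranteed by fully faithfulness together with the finiteness of $\K_0$ and of $X$ as noted before the statement, ensures this is a finite sum. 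I expect the only genuinely delicate point to be the Galois-coordinate computation above, and in particular the use of fully faithfulness to annihilate the terms with $k\notin\K_0$; the remaining verifications that $\Psi$ is a homomorphism and that the two maps are mutually inverse are routine bookkeeping in $B*_{\gamma}\K$.
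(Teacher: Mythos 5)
Your proposal is correct and follows essentially the same route as the paper: both exhibit the orthogonal idempotents $1_e\delta_x$ as a Galois coordinate system for $(A\#_{\alpha}^{\g}X)^{\gamma}\subseteq A\#_{\alpha}^{\g}X$, use fully faithfulness to annihilate the terms with $k\notin\K_0$, and conclude via the groupoid Galois duality theorem together with Proposition \ref{partefixa}. The only difference is that you spell out the isomorphism $\Psi$ and its inverse explicitly, whereas the paper delegates this to the cited result of Bagio--Paques.
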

\begin{proof}
We show that the extension $(A \#_{\alpha}^{\g}X)^{\gamma} \subseteq A \#_{\alpha}^{\g} X$ is Galois. Define $u_e=\sum_{x \in X_e}1_e\delta_x$, for all $e \in \g_0$. We proof that $\{u_e,u_e\}_{e \in \g_0}$ is a Galois coordinate system of $A \#_{\alpha}^{\g} X$ over $(A \#_{\alpha}^{\g} X)^{\gamma}$.
Consider $k \in \K \setminus \K_0$, then
\begin{align*}
\sum\limits_{e \in \g_0}\sum\limits_{x \in X_e}(1_e\delta_x)\gamma_k((1_e\delta_x){\bf 1}_{k^{-1}})&=\sum\limits_{e \in \g_0}\sum\limits_{x \in X_e}(1_e\delta_x)\gamma_k[(1_e\delta_x)(\sum\limits_{{f \in \g_0}\atop{y \in X_f \cap Y_{k^{-1}}}}1_f\delta_{y} )]&\\
&= \sum\limits_{e \in \g_0}\sum\limits_{x \in X_e \cap Y_{k^{-1}}}(1_e\delta_x)\gamma_k(1_e\delta_x) \\
&=\sum\limits_{e \in \g_0}\sum\limits_{x \in X_e \cap Y_{k^{-1}}}(1_e\delta_x)(1_e\delta_{\beta_k(x)})=0,
\end{align*}

\noindent once if there exists $x \in X_e \cap Y_{k^{-1}}$ such that $x=\beta_k(x)$, then $k \in \K_0$, since the action is fully faithful.

\vu

On the other hand, if $p \in \K_0$, by a similar calculations
\begin{align*}
\sum\limits_{e \in \g_0}\sum\limits_{x \in X_e}(1_e\delta_x)\gamma_p((1_e\delta_x){\bf 1}_p)={\bf 1}_p.
\end{align*}

Hence the extension $(A \#_{\alpha}^{\g} X)^{\gamma} \subseteq A \#_{\alpha}^{\g} X$ is Galois. Therefore, by \cite[Theorem 5.3]{pd} and Proposition \ref{partefixa}, it follows that
\begin{align*}
(A \#_{\alpha}^{\g} X)*_{\gamma}\K \simeq End({A \#_{\alpha}^{\g} X}_{(A \#_{\alpha}^{\g} X)^{\gamma}}) \simeq End({A \#_{\alpha}^{\g} X}_{A \#_{\lambda}^{\g} O^{\K}}).
\end{align*}
\end{proof}

In the following, we present some consequences of this result.

%
%Observe that  $\g$ is a $\g$-set via $\alpha=(X_g,\alpha_g)_{g \in \g}$ where $X_g=R_{r(g)}$ and $\alpha_g: X_{g^{-1}} \to X_g$ is given by $\alpha_g(h)=gh$, for all $g \in \g$ and $h \in X_{g^{-1}}$.

%\textcolor{red}{$\K$ precisa ser normal no resultado abaixo?}

\begin{cor}
Let $\g$ be a finite groupoid and $\mathcal{H}$ a wide subgroupoid of $\g$. If $A$ is a $\g$-graded algebra then 
\begin{align*}
(A \#_{\alpha}^{\g} \g)*_{\gamma} \mathcal{H} \simeq End(A \#_{\alpha}^{\g} \g_{A \#_{\lambda}^{\g} \g / \mathcal{H}}).
\end{align*}
\end{cor}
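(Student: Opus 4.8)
The plan is to obtain the corollary as the specialization of Theorem \ref{dualidade} to the case $\K = \mathcal{H}$ and $X = \g$, where $\g$ is viewed as a set acted upon by itself on one side and by the wide subgroupoid $\mathcal{H}$ on the other. Concretely, I would endow $X = \g$ with the $\g$-action $\alpha$ of Example \ref{ex1} (left translation $\alpha_g(h) = gh$ on $X_g = R_{r(g)}$) and with the $\mathcal{H}$-action $\beta$ obtained by restricting the action of Example \ref{ex2} (right translation $\beta_k(h) = hk^{-1}$ on $Y_k = D_{r(k)}$) to $k \in \mathcal{H}$. Since $\g$ is finite, $X = \g$ is finite, and $\g_0 = \mathcal{H}_0$ is finite because $\mathcal{H}$ is wide; thus the finiteness hypotheses of Theorem \ref{dualidade} are met.

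First I would check that $\g$ is a finite split $(\g_{\alpha}, \mathcal{H}_{\beta})$-set. Splitness for $\alpha$ follows from $\g = \coprod_{e \in \g_0} R_e$, and splitness for $\beta$ from $\g = \coprod_{p \in \mathcal{H}_0} D_p = \coprod_{p \in \g_0} D_p$, where wideness of $\mathcal{H}$ is exactly what guarantees the decomposition covers all of $\g$. The two invariance conditions in Definition \ref{def37}(i) are routine range/domain bookkeeping for left and right translation, while condition \ref{def37}(ii) is precisely the associativity identity $g(hk^{-1}) = (gh)k^{-1}$, giving $\alpha_g \beta_k = \beta_k \alpha_g$. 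Next I would verify that the $\mathcal{H}$-action is fully faithful: if $\beta_k(x) = xk^{-1} = x$ for some $x \in Y_k \cap Y_{k^{-1}}$ (so $d(x) = r(k) = d(k)$), then composing with $x^{-1}$ on the left forces $k^{-1} = id_{d(x)}$, hence $k \in \g_0 = \mathcal{H}_0$. With these verifications in place, Theorem \ref{dualidade} applies and yields $(A\#_{\alpha}^{\g} \g)*_{\gamma} \mathcal{H} \simeq \mathrm{End}(A\#_{\alpha}^{\g} \g_{A\#_{\lambda}^{\g} O^{\mathcal{H}}})$.

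It then remains to identify the $\mathcal{H}$-orbit set $O^{\mathcal{H}}$ with the coset space $\g/\mathcal{H}$ as $\g$-sets, which I expect to be the only delicate point. Using $\beta$, the $\mathcal{H}$-orbit of $x$ is $o(x) = \{xk^{-1} : k \in \mathcal{H},\ d(k) = d(x)\}$, so two elements lie in the same orbit exactly when they have the same range and $x^{-1}y \in \mathcal{H}$; the relation $\equiv_{\mathcal{H}}$ defining $\g/\mathcal{H}$ instead requires $g,h \in D_e$ with $gh^{-1} \in \mathcal{H}$. I would reconcile these descriptions through the bijection induced by inversion $x \mapsto x^{-1}$, which exchanges ranges with domains and $x^{-1}y \in \mathcal{H}$ with $x^{-1}(y^{-1})^{-1} \in \mathcal{H}$, thereby matching orbit classes with cosets. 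One then checks that this bijection is an isomorphism of $\g$-sets intertwining the induced action $\lambda$ of Lemma \ref{orbita} with the natural $\g$-action on $\g/\mathcal{H}$, whence $A\#_{\lambda}^{\g} O^{\mathcal{H}} = A\#_{\lambda}^{\g} \g/\mathcal{H}$ and the displayed isomorphism follows.

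The main obstacle, as indicated, is this last step: pinning down the left/right conventions so that the orbit space $O^{\mathcal{H}}$ produced by Lemma \ref{orbita} is genuinely the same $\g$-set as $\g/\mathcal{H}$, and confirming that the induced action $\lambda$ is transported correctly across the inversion bijection. Everything else reduces to the already-established Theorem \ref{dualidade} together with elementary groupoid arithmetic.
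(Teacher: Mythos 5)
Your proposal is correct and follows essentially the same route as the paper, which likewise specializes Theorem \ref{dualidade} to $X=\g$ with the left-translation $\g$-action of Example \ref{ex1} and the restricted right-translation $\mathcal{H}$-action of Example \ref{ex2}, checks full faithfulness and splitness, and identifies $O^{\mathcal{H}}$ with $\g/\mathcal{H}$. If anything you are more careful than the paper, which simply asserts $O^{\mathcal{H}}=\g/\mathcal{H}$ without addressing the left-coset versus right-coset bookkeeping that your inversion bijection resolves.
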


\begin{proof}
By Example \ref{ex1}, $\g$ is a split $\g$-set via $\alpha=(X_g,\alpha_g)_{g \in \g}$ where $X_g=R_{r(g)}$ and $\alpha_g: X_{g^{-1}} \to X_g$ is given by $\alpha_g(h)=gh$, $g \in \g$ and $h \in X_{g^{-1}}$.
On the other hand, $\g$ is a $\mathcal{H}$-set via $\beta=(Y_h,\beta_h)_{h \in \mathcal{H}}$ where $Y_h=D_{r(h)}$ and $\beta_h: Y_{h^{-1}} \to Y_h$ is given by $\beta_h(l)=lh^{-1}$, for all $h \in \mathcal{H}$ and $l \in Y_{h^{-1}}$, and this action is fully faithful. Since  $\mathcal{H}$ is a wide subgroupoid of $\g$, then $\g$ is a split $\mathcal{H}$-set. Is easy to see that $\g$ is a split $(\g_{\alpha}, \mathcal{H}_{\beta})$-set and $O^{\mathcal{H}}=\g / \mathcal{H}$. So the claim follows.
\end{proof}

Consider $X$ a set and denote by $I(X)$ the set of partial bijection between subsets of $X$. Is well-know that $I(X)$ is a groupoid with restricted product. For more details one can see \cite{defgrupoide}. Let $X$ be a $\g$-set via  $\alpha=(X_g,\alpha_g)_{g \in \g}$, denote by $I_{\g}(X)$ the groupoid which the elements are partial bijections $\rho \in I(X)$ such that $dom(\rho),im(\rho)$ are $\alpha$-invariants and  $\rho$ is a morphism of $\g$-set.

% Note that $I(X)$ is a groupoid with the composition: for any $\rho,\sigma \in I(X)$, $\rho\sigma:\sigma^{-1}(im(\sigma) \cap dom(\rho)) \to \rho((im(\sigma) \cap dom(\rho))$ where $(\rho \sigma)(x)=\rho(\sigma(x))$, for all $x \in \sigma^{-1}(im(\sigma) \cap dom(\rho))$. For more details one can see \cite{defgrupoide}.

\begin{defin}
Let $X$ be a split $\g$-set via  $\alpha=(X_g,\alpha_g)_{g \in \g}$. The action $\alpha$ is transitive if for any $x,y \in X$ there exists $g \in \g$ such that $\alpha_g(x)=y$. 	
\end{defin}

\begin{cor}
Let $\g$ be a groupoid such that $\g_0$ is finite, $X$ a finite split $\g$-set via  $\alpha=(X_g,\alpha_g)_{g \in \g}$ such that  $\alpha$ is transitive and $A$ a $\g$-graded algebra. Then
\begin{align*}
	(A \#_{\alpha}^{\g} X) *_{\gamma} I_{\g}(X) \simeq End(A \#_{\alpha}^{\g} X_{A \#_{\lambda}^{\g} O}),
\end{align*}
where $O$ is the set of the orbits of $X$ by the action of $I_{\g}(X)$.
\end{cor}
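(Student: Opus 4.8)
The plan is to apply Theorem \ref{dualidade} with $\K = I_{\g}(X)$, so that the whole task reduces to equipping $X$ with a suitable $I_{\g}(X)$-action and checking the four hypotheses of that theorem.

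First I would describe the $\K$-action explicitly. For $\rho \in I_{\g}(X)$ set $Y_{\rho} = im(\rho)$ and $\beta_{\rho} = \rho : Y_{\rho^{-1}} = dom(\rho) \to im(\rho) = Y_{\rho}$; that is, $\K$ acts on $X$ simply by evaluating partial bijections, and the groupoid axioms for $\beta$ follow from the restricted product in $I(X)$. Since the objects of $I_{\g}(X)$ are the identities $id_Y$ on $\alpha$-invariant subsets $Y \subseteq X$ and $X$ is finite, $\K_0 = I_{\g}(X)_0$ is finite (it injects into the finite power set of $X$), which secures the finiteness hypothesis on $\K_0$. I would then verify that $X$ is a $(\g_{\alpha}, \K_{\beta})$-set in the sense of Definition \ref{def37}: condition (i) is immediate because $dom(\rho)$ and $im(\rho)$ are by definition $\alpha$-invariant (so $Y_{\rho}$ is $\alpha$-invariant) and because $\rho$ is a morphism of $\g$-sets, giving $\beta_{\rho}(X_g \cap Y_{\rho^{-1}}) = \rho(X_g \cap dom(\rho)) \subseteq X_g \cap im(\rho)$, while condition (ii), $\alpha_g \circ \beta_{\rho} = \beta_{\rho} \circ \alpha_g$ on $X_{g^{-1}} \cap Y_{\rho^{-1}}$, is exactly the $\g$-equivariance built into the definition of $I_{\g}(X)$. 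Splitness of $X$ as a $\g$-set is assumed; splitness as a $\K$-set I address next.

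The decisive point, and the step I expect to carry the real content, is that transitivity of $\alpha$ collapses the situation to a group and is precisely what yields fully faithfulness. Since $\alpha$ is transitive, the only nonempty $\alpha$-invariant subset of $X$ is $X$ itself: if $Y \neq \emptyset$ is invariant, pick $y \in Y$, and for arbitrary $x \in X$ choose $g$ with $\alpha_g(y) = x$, whence $x \in Y$. Consequently every nonempty element of $I_{\g}(X)$ is a $\g$-equivariant bijection of $X$, so $\K$ is, up to the empty map, the automorphism group $Aut_{\g}(X)$ with the single object $X$; in particular $X = \coprod_{p \in \K_0} Y_p$ holds trivially, giving the split $\K$-set structure. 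For fully faithfulness, suppose $\beta_{\rho}(x) = \rho(x) = x$ for some $x \in Y_{\rho} \cap Y_{\rho^{-1}}$. Then for any $z \in X$, writing $z = \alpha_g(x)$ by transitivity and using $\g$-equivariance, $\rho(z) = \rho(\alpha_g(x)) = \alpha_g(\rho(x)) = \alpha_g(x) = z$, so $\rho = id_X \in \K_0$. Hence the action of $\K$ on $X$ is fully faithful.

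Having checked that $\g_0$ and $\K_0$ are finite, that $X$ is a finite split $(\g_{\alpha}, \K_{\beta})$-set, and that the $\K$-action is fully faithful, I would invoke Theorem \ref{dualidade} directly to obtain $(A\#_{\alpha}^{\g} X) *_{\gamma} I_{\g}(X) \simeq End({A\#_{\alpha}^{\g} X}_{A\#_{\lambda}^{\g} O^{\K}})$, where $\gamma$ and $\lambda$ are the actions produced in Proposition \ref{skew} and Lemma \ref{orbita}. Finally I would observe that $O^{\K}$, the $\K$-orbit of $X$, is by construction the set $O$ of orbits of $X$ under $I_{\g}(X)$, which identifies the right-hand side with the claimed one. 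Apart from the transitivity argument forcing fully faithfulness, every remaining verification is a routine unwinding of the definitions of $I_{\g}(X)$ and of a $(\g_{\alpha},\K_{\beta})$-set.
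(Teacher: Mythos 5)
Your proposal is correct and takes essentially the same route as the paper: both equip $X$ with the $I_{\g}(X)$-action $\beta_\rho=\rho$ on $Y_\rho=im(\rho)$ and derive fully faithfulness from transitivity by propagating a fixed point $\rho(x)=x$ along $\alpha_g$ to all of $dom(\rho)$. Your additional observation that transitivity forces every nonempty $\alpha$-invariant subset to equal $X$ is a nice way to make the splitness of the $\K$-action and the finiteness of $\K_0$ explicit, details the paper leaves to the reader.
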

 \begin{proof}
 Observe that $X$ is a split $(\g_{\alpha}, I_{\g}(X)_{\beta})$-set where \begin{align*}\beta=( Y_{\rho}, \beta_{\rho})_{ \rho \in I_{\g}(X)},
 \end{align*} with $Y_{\rho}=im(\rho)$ and $\beta_{\rho}=\rho$, for all $\rho \in I_{\g}(X)$. It remains to show that the action of $I_{\g}(X)$ on $X$ is fully faithful. Indeed, let $x \in Y_{\rho^{-1}} \cap Y_{\rho}$ such that $\rho(x)=x$, for some $\rho \in I_{\g}(X)$. Since the action of $\g$ on $X$ is transitive, for all $y \in dom(\rho)$, there exists $g \in \g$ such that $y=\alpha_g(x)=\alpha_g(\rho(x))=\rho(\alpha_g(x))=\rho(y)$. Hence $\rho \in I_{\g}(X)_0$.
 \end{proof}
%%%%%%%%%%%%%%%%%%%%%%%%%%%%%%%%%%%%%%%%%%%%%%%%%%%%%%%%%%%%%%%%%%%%%%%%%%%%%%%%%%%

\section{Morita Theory}

For the results of this section we consider $\g$ a groupoid such that $\g_0$ is finite, $A$ a $\g$-graded algebra and $X$ a finite split $\g$-set via  $\alpha=(X_g,\alpha_g)_{g \in \g}$. A left $A$-module $M$ is said a {\it{graded $A$-module of type $X$}}, or simply {\it{$X$-graded}}, if there exists a family $\{M_x\}_{x \in X}$ of vector subspaces of $M$ such that $M=\oplus_{x \in X}M_x$, satisfying
\begin{align}\label{xgraded}
A_gM_x\left\{
\begin{array}{rl}
\subseteq M_{\alpha_g(x)},& \mbox{if } x \in X_{d(g)}\\
=0,& \mbox{otherwise},
\end{array}
\right.
\end{align}
for all $g \in \g$ and $x \in X$. Similarly, we can define graded right $A$-module of type $X$.
Denote by $(\g,X,A)_l$ (resp. $(\g,X,A)_r$) the category of the left (resp. right) $A$-modules $X$-graded.  A morphism in $(\g,X,A)_l$ (resp. $(\g,X,A)_r$) is a morphism of left (resp. right) $A$-modules $\phi: M \to N$  such that $\phi(M_x) \subseteq N_x$, for all $x \in X$. For simplicity we denote the category of the left modules by  $(\g,X,A)$. %The next theorem shows that the categories $(\g,X,A)$ and $_{A \#_{\alpha}^{\g} X}\mathcal{M}$ are isomorphic.

% In case that the groupoid is a group we recover \cite{morita}.

\begin{teo} \label{isocategorias}
The categories $(\g,X,A)$ and $_{A \#_{\alpha}^{\g} X}\mathcal{M}$ are isomorphic.
\end{teo}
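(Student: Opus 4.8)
The plan is to construct a pair of functors between $(\g,X,A)$ and $_{A \#_{\alpha}^{\g} X}\mathcal{M}$ and show they are mutually inverse. Given a left $A$-module $M$ that is $X$-graded, with $M = \oplus_{x \in X} M_x$, I want to equip $M$ with the structure of a left $A \#_{\alpha}^{\g} X$-module. The natural rule to define is
\begin{align*}
(a_g \delta_x) \cdot m = \left\{ \begin{array}{rl} a_g \, m_x, & \mbox{if } x \in X_{d(g)} \\ 0, & \mbox{otherwise,} \end{array} \right.
\end{align*}
where $m_x$ denotes the homogeneous component of $m$ in $M_x$. First I would check this is well defined and associative, using the multiplication rule of the smash product: the key compatibility is that $(a_g \delta_x)(b_h \delta_y)$ is nonzero exactly when $d(g) = r(h)$ and $\alpha_h(y) = x$, and in that case acting by $a_g b_h \delta_y$ on $m$ must agree with first acting by $b_h \delta_y$ and then by $a_g \delta_x$. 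This reduces to the grading condition \eqref{xgraded}, namely $A_h M_y \subseteq M_{\alpha_h(y)} = M_x$, so that $b_h m_y$ lands in the $x$-component and is then hit by $a_g$. I would also verify that the identity $1_{A \#_{\alpha}^{\g} X} = \sum_{e \in \g_0} \sum_{x \in X_e} 1_e \delta_x$ acts as the identity, which uses $M = \oplus_{x \in X} M_x$ together with $1_e$ acting as the identity on $M_x$ for $x \in X_e$ (recall $A_e$ is unital with identity $1_e$).

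\textbf{The inverse functor.} Conversely, given a left $A \#_{\alpha}^{\g} X$-module $N$, I would produce an $X$-graded left $A$-module. The idempotents $1_e \delta_x$ for $x \in X_e$ are orthogonal (as noted after \eqref{imersao}), so setting $N_x = (1_{e}\delta_x) N$ for the unique $e \in \g_0$ with $x \in X_e$ gives a direct sum decomposition $N = \oplus_{x \in X} N_x$, since these idempotents sum to the identity. The left $A$-module structure on $N$ comes through the monomorphism $\eta \colon A \to A \#_{\alpha}^{\g} X$ of \eqref{imersao}, via $a \cdot n = \eta(a) n$. I then must check that this $A$-action respects the grading in the sense of \eqref{xgraded}: for $a_g \in A_g$ and $n \in N_x$ with $x \in X_{d(g)}$, I need $a_g \cdot n \in N_{\alpha_g(x)}$, and $a_g \cdot n = 0$ otherwise. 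This follows from the identities in \eqref{eta}, in particular $a_g \delta_{\alpha_{g^{-1}}(x)} = (1_{r(g)} \delta_x)\eta(a_g)$, which tells us how left multiplication by $\eta(a_g)$ interacts with the idempotents; the condition $x \in X_{d(g)}$ is exactly what makes the relevant product nonzero.

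\textbf{Functoriality and mutual inverseness.} A morphism $\phi \colon M \to N$ in $(\g,X,A)$ satisfies $\phi(M_x) \subseteq N_x$ and is $A$-linear, and I would check it is automatically $A \#_{\alpha}^{\g} X$-linear for the induced structures, which is immediate from the componentwise definition of the action; conversely an $A \#_{\alpha}^{\g} X$-linear map commutes with the idempotents $1_e \delta_x$ and hence preserves the components $N_x$, and restricts to an $A$-linear map. Finally I would verify that the two constructions are mutually inverse on objects: starting from $M$, the component $(1_e \delta_x)\cdot M$ recovered from the induced action equals $M_x$, and starting from $N$ the reconstructed action agrees with the original. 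This last verification, together with checking identities on morphisms, establishes that the functors give an isomorphism of categories, not merely an equivalence.

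\textbf{Main obstacle.} The step I expect to require the most care is the well-definedness and associativity of the action in the forward direction, specifically tracking the condition $\alpha_h(y) = x$ in the smash product multiplication against the grading shift $\alpha_h(y)$ in \eqref{xgraded}; one must be careful that the homogeneous component selected by $\delta_x$ matches the component into which $A_h$ sends $M_y$. The bookkeeping with domains and ranges $d(g), r(h)$ and the partially defined composition is where errors are easy to make, so I would set up the verification by fixing homogeneous elements and a single component $m_y \in M_y$ and computing both associations explicitly.
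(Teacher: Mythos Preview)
Your proposal is correct and follows essentially the same approach as the paper's proof: the paper also defines functors that are the identity on underlying sets, builds the $X$-grading on a module $N$ via $N_x=(1_e\delta_x)N$ using the orthogonal idempotents, installs the $A$-action through $\eta$, defines the smash-product action on an $X$-graded module by $(a_g\delta_x)\cdot m=a_g m_x$, and then checks associativity, functoriality, and mutual inverseness exactly along the lines you describe. The only cosmetic difference is that the paper does not include the case ``$x\notin X_{d(g)}$'' in the action formula, since by the definition \eqref{definsmash} a generator $a_g\delta_x$ already has $x\in X_{d(g)}$.
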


\begin{proof}
We define functors $F: \, _{A \#_{\alpha}^{\g} X}\mathcal{M} \to (\g,X,A)$ and $G: (\g,X,A) \to \, _{A \#_{\alpha}^{\g} X}\mathcal{M}$ by
\begin{align*}
F(V)=V, \,\,\,\, F(\psi)=\psi, \,\,\,\, G(M)=M, \,\,\,\, \mbox{and} \,\,\,\, G(\phi)=\phi,
\end{align*}
for all objects $V \in \, _{A \#_{\alpha}^{\g} X}\mathcal{M}$, $M \in (\g,X,A)$, and for all morphisms $\psi \in \, _{A \#_{\alpha}^{\g} X}\mathcal{M}$, $\phi \in (\g,X,A)$. If $V$ is a left $A \#_{\alpha}^{\g} X$-module, then $V$ is a left $A$-module via $ a \vartriangleright v=\eta(a)v$, for all  $a \in A$ and $v \in V$, where $\eta$ is given in \eqref{imersao}. Moreover, $V$ is $X$-graded. In fact, for all $x \in X$, we have that  $x \in X_e$, for some $e \in \g_0$. Define $V_x=(1_e\delta_x)V$. Then, for all $v \in V$, it follows that
\begin{align*}
v=1_{A \#_{\alpha}^{\g} X}\,v= \bigg(\sum\limits_{e \in \g_0} \sum\limits_{x \in X_e}1_e \delta_x \bigg)v= \sum\limits_{e \in \g_0} \sum\limits_{x \in X_e}(1_e \delta_x)v \in \sum\limits_{x \in X}V_x.
\end{align*}
Since $C = \{ 1_e\delta_x : e \in \g_0, \mbox{ } x \in X_e \}$ is the set of orthogonal idempotents, we have that $V=\oplus_{x \in X}V_x$. Now we verify the condition \eqref{xgraded}. Let  $a_g \in A_g$, $g \in \g$ and $v \in V_x$, $x \in X$. Suppose that $x \in X_{d(g)}$. Hence $v=(1_{d(g)}\delta_x)v'$, for some $v' \in V$. Then
\begin{align*}
a_g \vartriangleright v&= \eta(a_g)(1_{d(g)}\delta_x)v'\overset{\eqref{eta}}{=} (a_g\delta_x)v' \\
&=(1_{r(g)}\delta_{\alpha_g(x)})\big((a_g\delta_x))v'\big) \in V_{\alpha_g(x)}.
\end{align*}
For the other hand, if $x \in X_e$, $e \neq d(g)$, then $a_g  \vartriangleright v=0$. 

Now, let $\psi: V \rightarrow W$ be a left $A \#_{\alpha}^{\g} X$-modules morphism. If $ (1_e\delta_x)v \in V_x$, then $$\psi((1_e\delta_x)v) = (1_e\delta_x)\psi(v) \in (1_e\delta_x)W = W_x.$$

%Thus every morphism in $_{A \#_{\alpha}^{\g} X}\mathcal{M}$ is a morphism in $(\g,X,A)$.

Reciprocally, if $M$ is $X$-graded, then $M$ is a left $A \#_{\alpha}^{\g} X$-module via $(a_g\delta_x) \cdot m=a_gm_x$, for all $a_g\delta_x \in A \#_{\alpha}^{\g} X$ and $m \in M$. Clearly $1_{A \#_{\alpha}^{\g} X} \cdot m=m$, for all $m \in M$. Let $a_g\delta_x$, $b_h\delta_y \in A \#_{\alpha}^{\g} X$ and $m \in M$. Hence
\begin{align*}
\big((a_g\delta_x)(b_h\delta_y) \big)\cdot m & =\left\{
\begin{array}{rl}
(a_gb_h\delta_y) \cdot m,& \mbox{if } d(g)=r(h) \mbox{ and } \alpha_h(y)=x\\
0, & \mbox{otherwise}
\end{array}
\right. \\
&=\left\{
\begin{array}{rl}
a_gb_hm_y, & \mbox{if } d(g)=r(h) \mbox{ and } \alpha_h(y)=x\\
0, & \mbox{otherwise}
\end{array}
\right. \\
&=\left\{
\begin{array}{rl}
a_g(b_hm_y)_x,& \mbox{if } d(g)=r(h) \mbox{ and } \alpha_h(y)=x\\
0, & \mbox{otherwise}
\end{array}
\right. \\
&=(a_g\delta_x) \cdot (b_hm_y)\\&= (a_g\delta_x) \cdot  \big( (b_h\delta_y) \cdot m \big).
\end{align*}

Let $\phi: M \rightarrow N$ be a morphism in $(\g,X,A)$, $a_g\delta_x \in A \#_{\alpha}^{\g} X$ and $m \in M$ such that $\phi(m) = n$. Then,
\begin{align*}
\phi((a_g\delta_x) \cdot m)  = \phi(a_gm_x) = a_g\phi(m_x) = a_gn_x = (a_g\delta_x) \cdot n = (a_g\delta_x) \cdot \phi(m).
\end{align*} %Therefore, every morphism in $(\g,X,A)$ is a morphism in $_{A \#_{\alpha}^{\g} X}\mathcal{M}$.
Finally, consider $V \in \, _{A \#_{\alpha}^{\g} X}\mathcal{M}$ and $v \in G(F(V))$. Hence, for all $a_g\delta_x \in A \#_{\alpha}^{\g} X$, follows that
\begin{align*}
(a_g\delta_x)\cdot v=\eta(a_g)v_x=\eta(a_g)(1_{d(g)}\delta_x)v\overset{\eqref{eta}}{=}(a_g\delta_x)v,
\end{align*}
where the last action is the usual action in $_{A \#_{\alpha}^{\g} X}\mathcal{M}$. Now let $M \in (\g,X,A)$, $a_g \in A_g$, for $g \in \g$, and $m_x \in (F(G(M)))_x$, for $x \in X$. Hence
\begin{align*}
a_g  \vartriangleright m_x &= \eta(a_g) \cdot m_x = \sum\limits_{y \in X_{d(g)}}a_g(m_x)_y \\
&=\left\{
\begin{array}{rl}
a_gm_x,& \mbox{if } x \in X_{d(g)}\\
0, & \mbox{ otherwise}
\end{array}
\right. \\
&=a_gm_x.
\end{align*}

\noindent It remains to show the compatibility between the graduations. Let $m \in (F(G(M)))_x$ such that $x \in X_e$, $e \in \g_0$. Then, $
m=(1_e\delta_x)m=1_em_x=m_x $
the usual graduation of $M \in (\g,X,A)$.
\end{proof}

\begin{obs} \label{isocategoriasdireita}
 The Theorem \ref{isocategorias} is also true for right modules, i.e., the categories $(\g,X,A)_r$ and  $\mathcal{M}_{A \#_{\alpha}^{\g} X}$ are isomorphic.
\end{obs}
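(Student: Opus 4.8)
The plan is to transcribe the proof of Theorem \ref{isocategorias} into its right-handed form, the only genuine preliminary being to fix the correct grading convention. A right $A$-module $M = \oplus_{x \in X} M_x$ is \emph{$X$-graded} when
\begin{align*}
M_y A_g \left\{
\begin{array}{rl}
\subseteq M_{\alpha_{g^{-1}}(y)}, & \mbox{if } y \in X_{r(g)}\\
= 0, & \mbox{otherwise},
\end{array}
\right.
\end{align*}
for all $g \in \g$ and $y \in X$; this is forced by requiring that the right multiplication $(b_h\delta_y)\cdot a_g = b_h a_g\delta_{\alpha_{g^{-1}}(y)}$ of $A$ on $A \#_{\alpha}^{\g} X$ be homogeneous, so it is the natural dual of \eqref{xgraded}.

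With this convention in hand, I would define $F: \mathcal{M}_{A \#_{\alpha}^{\g} X} \to (\g,X,A)_r$ and $G: (\g,X,A)_r \to \mathcal{M}_{A \#_{\alpha}^{\g} X}$ as the identity on underlying spaces and on morphisms, exactly as in Theorem \ref{isocategorias}. For a right $A \#_{\alpha}^{\g} X$-module $V$, I would make $V$ a right $A$-module by $v \triangleleft a = v\eta(a)$ and set $V_x = V(1_e\delta_x)$ for $x \in X_e$; the orthogonality of $\{1_e\delta_x\}$ together with $1_{A \#_{\alpha}^{\g} X} = \sum_{e \in \g_0}\sum_{x \in X_e}1_e\delta_x$ gives $V = \oplus_{x \in X} V_x$, and the second identity in \eqref{eta} yields the homogeneity $V_y A_g \subseteq V_{\alpha_{g^{-1}}(y)}$. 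Conversely, for $M$ right $X$-graded I would put $m \cdot (a_g\delta_x) = m_{\alpha_g(x)} a_g$, where $m_{\alpha_g(x)}$ is the $\alpha_g(x)$-homogeneous component of $m$; since $\alpha_g(x) \in X_{r(g)}$ whenever $x \in X_{d(g)}$, the grading condition places $m_{\alpha_g(x)} a_g$ in $M_x$, so the formula is well-defined, and one checks $m \cdot 1_{A \#_{\alpha}^{\g} X} = m$ using $\alpha_e = id_{X_e}$ and the splitting of $X$.

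The step deserving real care --- and the main obstacle --- is the associativity $m \cdot \big((a_g\delta_x)(b_h\delta_y)\big) = \big(m \cdot (a_g\delta_x)\big) \cdot (b_h\delta_y)$, because here the selection of homogeneous components runs opposite to the left-handed case and must be tracked explicitly. On the left the product is nonzero only when $d(g)=r(h)$ and $\alpha_h(y)=x$, producing $m_{\alpha_{gh}(y)}(a_g b_h)$; on the right one first lands in $M_x$ via $m_{\alpha_g(x)} a_g$ and then applies $\cdot(b_h\delta_y)$, and the identities $\alpha_{gh}=\alpha_g\alpha_h$ and $\alpha_h(y)=x$ force $\alpha_g(x) = \alpha_{gh}(y)$, matching the two sides. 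Once this is verified, functoriality and the fact that $F$ and $G$ are mutually inverse follow verbatim from Theorem \ref{isocategorias}, and Remark \ref{isocategoriasdireita} is thereby reduced to it.
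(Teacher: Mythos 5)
Your proposal is correct and is exactly the dualization the paper intends: the remark is stated without proof, leaving the reader to mirror the argument of Theorem \ref{isocategorias}, and your right-handed grading convention $M_yA_g \subseteq M_{\alpha_{g^{-1}}(y)}$ for $y \in X_{r(g)}$, the module structure $m \cdot (a_g\delta_x) = m_{\alpha_g(x)}a_g$, and the associativity check via $\alpha_{gh} = \alpha_g\alpha_h$ are precisely the right translations. No gaps.
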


Recall that $X$ is a split $\g$-set via $\alpha=(X_g,\alpha_g)_{g \in \g}$. Hence, for each $x \in X$, there exists a unique $e \in \g_0$ such that $x \in X_e$. The set $\g_x=\{g \in \g_e: \alpha_g(x)=x\}$ is said {\it{$x$-stabilizer in $\g$}}. Moreover, $A^{\g_x}=\oplus_{g \in \g_x}A_g$, for all $x \in X$. In the following, we show that the categories isomorphism given in the Theorem \ref{isocategorias} allows establish a condition for $A^{\g_x}$ and $A \#_{\alpha}^{\g} X$ be Morita equivalent.

Let $x,y \in X$ such that $x\in X_e$ and $y \in X_f$. Consider the set
\begin{align*}
V_{x,y}=\bigoplus\limits_{{h \in D_e \cap R_f} \atop {\alpha_h(x)=y}}A_h.
\end{align*}
Clearly, $V_{x,x}=A^{\g_x}$. If $V_{x,y} \neq \emptyset$ and $V_{y,x} \neq \emptyset$, then $V_{x,y}$ is a right $A^{\g_x}$-module and $V_{y,x}$ is a left $A^{\g_x}$-module via multiplication. Therefore, $\oplus_{y \in X}V_{x,y}$ is a right $A^{\g_x}$-module 
and $\oplus_{y \in X}V_{y,x}$ is a left $A^{\g_x}$-module. We set:
\begin{align*}
^xV:=\oplus_{y \in X}V_{x,y} \quad \text{and} \quad V^x:=\oplus_{y \in X}V_{y,x}.
\end{align*}

Notice that, for all $x \in X$, it is straightforward to see that $V^x$ is a left $A$-module by multiplication and $^xV$ is a right $A$-module by multiplication. Hence, by Theorem \ref{isocategorias} and Remark \ref{isocategoriasdireita}, it follows that $^xV$ is a left $A \#_{\alpha}^{\g} X$-module and $V^x$ is a right $A \#_{\alpha}^{\g} X$-module, via $(a_g\delta_y)\cdot w=a_gw_y$ and $v \cdot (a_g\delta_y)=(va_g)_y$, respectively, for all $a_g\delta_y \in A \#_{\alpha}^{\g} X$, $v \in$ $^xV$ and $w \in V^x$. Then we can conclude that $^xV$ is a $(A \#_{\alpha}^{\g} X, A^{\g_x})$-bimodule and $V^x$ is a $(A^{\g_x}, A \#_{\alpha}^{\g} X)$-bimodule. Now, define
\begin{align*}
& (\,,): V^x \otimes_{A \#_{\alpha}^{\g} X} {^x}V \to A^{\g_x}, \mbox{ by } (v,w)=\sum_{g \in \g_x}(vw)_g,\\
& [\,,]: {^xV} \otimes_{A^{\g_x}} V^x \to A \#_{\alpha}^{\g} X, \mbox{ by } [w,v]=\sum_{z \in X}(wv_z)\delta_z,
\end{align*}
for all $v=\sum_{z \in X}v_z \in V^x$, where $v_z= \sum\limits_{{h \in D_f \cap R_e} \atop {\alpha_h(z)=x}}a_h$, and $w \in  {^xV}$.

\begin{teo} \label{morita} Keeping the notation above,
\begin{itemize}
\item[(i)] $[A \#_{\alpha}^{\g} X, V^x, {^xV}, A^{\g_x}, [\, ,], (\, ,)]$ is a Morita context. 
\item[(ii)] If the Morita context is strict and $X_e \neq \emptyset$, for all $e \in \g_0$, then
$$\sum\limits_{{g \in \g}\atop{\alpha_g(y)=x}}A_{g^{-1}}A_g=\sum_{g \in \g}A_{d(g)}, \mbox { for all } y \in X.$$
\item[(iii)] If $\sum\limits_{{g \in \g}\atop{\alpha_g(y)=x}}A_{g^{-1}}A_g=\sum_{g \in \g}A_{d(g)}, \mbox { for all } y \in X$, then the Morita context is strict.
\end{itemize}
\end{teo}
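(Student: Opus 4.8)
The plan is to recognize the whole datum as the classical Morita context attached to an idempotent, which simultaneously settles (i) and reduces (ii)--(iii) to a single ideal-theoretic condition. Fix the object $e\in\g_0$ with $x\in X_e$, put $R:=A\#_{\alpha}^{\g}X$ and $\varepsilon:=1_e\delta_x$, an idempotent of $R$. Using the product rule of the smash product together with \eqref{eta}, one computes $R\varepsilon=\{a_g\delta_x:g\in D_e\}$, $\varepsilon R=\{a_g\delta_{\alpha_{g^{-1}}(x)}:g\in R_e\}$ and $\varepsilon R\varepsilon=\{a_g\delta_x:g\in\g_x\}$. First I would build $\ku$-linear bimodule isomorphisms ${}^{x}V\cong R\varepsilon$, $V^{x}\cong\varepsilon R$ and $A^{\g_x}\cong\varepsilon R\varepsilon$, all defined on homogeneous pieces by $a_h\mapsto a_h\delta_x$ (respectively $a_h\mapsto a_h\delta_{\alpha_{h^{-1}}(x)}$ on $V^{x}$), and verify that under these identifications $[\,,]$ becomes the multiplication $R\varepsilon\otimes_{\varepsilon R\varepsilon}\varepsilon R\to R$ and $(\,,)$ becomes $\varepsilon R\otimes_{R}R\varepsilon\to\varepsilon R\varepsilon$. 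This is the heart of the routine computation, and the place to be careful is matching the conditions $d(h)=e$, $r(h)=e$ and $\alpha_h(\cdot)=\cdot$ built into $V_{x,y}$ and $V_{y,x}$ with the product rule of $R$.

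Granting these identifications, part (i) is immediate: for any unital ring $R$ and idempotent $\varepsilon$ the tuple $(R,R\varepsilon,\varepsilon R,\varepsilon R\varepsilon)$ with its two multiplication maps is a Morita context, the axioms $[w,v]w'=w(v,w')$ and $v[w,v']=(v,w)v'$ being mere instances of associativity in $R$. If a self-contained argument is preferred, these two axioms can instead be checked directly from $[w,v]=\sum_{z}(wv_z)\delta_z$ and $(v,w)=\sum_{g\in\g_x}(vw)_g$ using the product rule, but I would avoid that unless required.

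Next I would reduce strictness to one condition. Since $R$ is unital, $\varepsilon R\,R\varepsilon=\varepsilon R\varepsilon$, so $(\,,)$ is always surjective (concretely $(a_g,1_e)=a_g$ for every $a_g\in A^{\g_x}$); hence the context is strict if and only if $[\,,]$ is surjective, and as the image of $[\,,]$ is the two-sided ideal $R\varepsilon R$, strictness is equivalent to $R\varepsilon R=R$. Because $\{1_f\delta_w:f\in\g_0,\,w\in X_f\}$ is a family of orthogonal idempotents summing to $1_R$, this is in turn equivalent to $1_f\delta_w\in R\varepsilon R$ for all $f$ and all $w\in X_f$. For (iii), from $\sum_{g:\alpha_g(w)=x}A_{g^{-1}}A_g=A_{d(w)}$ I would write $1_{d(w)}=\sum_i p_iq_i$ with $p_i\in A_{g_i^{-1}}$, $q_i\in A_{g_i}$ and $\alpha_{g_i}(w)=x$; then $p_i\delta_x\in R\varepsilon$, $q_i\delta_w\in\varepsilon R$, and the product rule gives $\sum_i(p_i\delta_x)(q_i\delta_w)=1_{d(w)}\delta_w$, so $1_{d(w)}\delta_w\in R\varepsilon R$ for every $w$, whence $R\varepsilon R=R$ and the context is strict.

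For (ii), conversely, strictness gives $1_{d(w)}\delta_w\in R\varepsilon R$; expanding it over the generators $(a_g\delta_x)(a'_{g'}\delta_w)$ of $R\varepsilon R$ and reading off the component of degree $d(w)$ over $\delta_w$ yields $1_{d(w)}\in\sum_{g:\alpha_g(w)=x}A_{g^{-1}}A_g$. The decisive final step is the observation that $\sum_{g:\alpha_g(w)=x}A_{g^{-1}}A_g$ is a two-sided ideal of the unital algebra $A_{d(w)}$ (stability follows from $A_{d(w)}A_{g^{-1}}\subseteq A_{g^{-1}}$ and $A_gA_{d(w)}\subseteq A_g$), so once it contains the identity $1_{d(w)}$ it equals all of $A_{d(w)}$; the hypothesis $X_e\neq\emptyset$ for every $e\in\g_0$ ensures that each object is represented by some $w$, so these per-point identities cover every component appearing in $\sum_{g}A_{d(g)}$ and assemble into the asserted equality for all $y\in X$. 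The step I expect to be the main obstacle is the first one, namely setting up the three bimodule isomorphisms and checking that $[\,,]$ and $(\,,)$ genuinely transport to multiplication in $R$; once that bookkeeping is done, (i) is formal and (ii)--(iii) come down to writing $1_{d(w)}\delta_w$ in terms of the generators of $R\varepsilon R$ and invoking the ideal argument.
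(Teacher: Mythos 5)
Your proof is correct, but it takes a genuinely different route from the paper's. You recognize the whole datum as the standard Morita context of the idempotent $\varepsilon=1_e\delta_x$ in $R=A\#_{\alpha}^{\g}X$, via the identifications ${}^{x}V\cong R\varepsilon$, $V^{x}\cong\varepsilon R$ and $A^{\g_x}\cong\varepsilon R\varepsilon$; these are indeed correct ($R\varepsilon$ is spanned by the $a_g\delta_x$ with $g\in D_e$, matching $\oplus_{h\in D_e}A_h={}^{x}V$, and similarly for the other two), and under them both pairings become multiplication in $R$, so (i) is formal. The paper instead verifies the two associativity axioms by direct computation on homogeneous elements $a_g\in V_{y,x}$, $b_k\in V_{x,z}$, $c_l\in V_{t,x}$. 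Your reduction of strictness to $R\varepsilon R=R$, and further to $1_f\delta_w\in R\varepsilon R$ for each $w\in X_f$, is cleaner than the paper's direct manipulation of $[\, ,]$, and your observation that $(\, ,)$ is automatically surjective usefully reconciles the paper's definition of ``strict'' (surjectivity of $[\, ,]$ alone) with the usual one. The computations underlying (ii) and (iii) --- writing $1_f=\sum_i p_iq_i$ with $p_i\in A_{g_i^{-1}}$, $q_i\in A_{g_i}$, $\alpha_{g_i}(w)=x$, and conversely extracting the component of degree $id_f$ over $\delta_w$ from an element of $R\varepsilon R$ --- are essentially the same as the paper's, just packaged through the idempotent. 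One caveat: what your argument (and, on close reading, the paper's own proof of (ii)) actually establishes is the per-point identity $\sum_{g:\,\alpha_g(w)=x}A_{g^{-1}}A_g=A_f$ for each $w\in X_f$; since this left-hand side lies in the single component $A_f$, it cannot literally equal $\sum_{g\in\g}A_{d(g)}=\oplus_{f\in\g_0}A_f$ when more than one $A_f$ is nonzero, so your final ``assembly'' step should be read as matching the intended meaning of the displayed identity rather than its literal form. This is an ambiguity in the statement itself, not a defect of your approach.
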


\begin{proof}
(i) Is easy to see that $(\, ,)$ is a morphism of $A^{\g_x}$-bimodules and $[\, ,]$ is a morphism of $A \#_{\alpha}^{\g} X$-bimodules. For that $[A \#_{\alpha}^{\g} X, V^x, ^xV, A^{\g_x}, [\, ,], (\, ,)]$ be a Morita context it remains to show that the conditions of associativity are valid. For any $y,z,t \in X$, such that $y \in X_{f}$, $z \in X_{f'}$ and $t \in X_{f''}$, $f,f',f'' \in \g_0$, consider $a_g \in V_{y,x}$, $b_k \in V_{x,z}$ and $c_l \in V_{t,x}$. Hence
\begin{align*}
a_g \cdot [b_k,c_l]&= a_g \cdot (b_kc_l\delta_t )\\
&= \left\{
\begin{array}{ll}
(a_gb_kc_l)_t,  \text{ if } f=d(g)=r(k)=f'\\
0,  \text{ otherwise}
\end{array}
\right. \\
&= \left\{
\begin{array}{ll}
a_gb_kc_l,  \text{ if } f=f' \text{ and } \alpha_{kl}(t)=y\\
0,  \text{ otherwise,}
\end{array}
\right.
\end{align*}
because $a_gb_kc_l \in V_yA_{kl} \subseteq V_{\alpha_{(kl)^{-1}}(y)}$. On the other hand,
\begin{align*}
(a_g,b_k)c_l &= \left\{
\begin{array}{ll}
a_gb_kc_l,  \text{ if } f=d(g)=r(k)=f' \text{ and } gk \in \g_x \\
0,  \text{ otherwise.}
\end{array}
\right.
\end{align*}
If $f=f'$, it is easy to check that $\alpha_{kl}(t)=y$ if and only if $gk \in \g_x$. Hence $a_g \cdot [b_k,c_l]=(a_g,b_k)c_l$. Similarly, if $a_g \in W_y$, $b_k \in V_z$ and $c_l \in W_t$, then $[a_g,b_k]\cdot c_l=a_g \cdot (b_k,c_l)$. Therefore, $[A \#_{\alpha}^{\g} X, V^x, {^xV}, A^{\g_x}, [\, ,], (\, ,)]$ is a Morita context.

(ii) Suppose that $X_e \neq \emptyset$, for all $e \in \g_0$, and the Morita context is strict, i.e., $[\, ,]$ is surjective. Clearly, for all $y \in X$, we have that $\sum_{{g \in \g}\atop{\alpha_g(y)=x}}A_{g^{-1}}A_g \subseteq \sum_{g \in \g}A_{d(g)}$. Conversely, take $y \in  X$ and $a_{d(h)} \in \sum_{g \in \g}A_{d(g)}$. Let $u \in X$ such that $u \in X_{d(h)}$. Then $a_{d(h)}\delta_u \in A \#_{\alpha}^{\g} X$. Hence there exists $\sum_{i=1}^n w_i \otimes v_i \in {^xV} \otimes_{A^{\g_x}} V^x$ such that
\begin{align*}
a_{d(h)}\delta_u=\sum_{i=1}^n[w_i,v_i]=\sum_{i=1}^n\sum_{z \in X}w_i{v_i}_z\delta_z,
\end{align*}
where $v_i=\sum_{z \in X}{v_i}_z$. Consequently, $a_{d(h)}=\sum_{i=1}^nw_i{v_i}_u$. If $w_i=\sum_{z \in X}w^i_z=\sum_{z \in X}\sum_{\alpha_k(x)=z}b^i_k$ and ${v_i}_{u}=\sum_{\alpha_l(y)=x}c^i_l$, then
\begin{align*}
a_{d(h)}=\sum_{i=1}^n\sum_{z \in X}w^i_z{v_i}_{u}=\sum_{i=1}^n\sum_{z \in X}\sum_{\alpha_k(x)=z}\sum_{\alpha_l(y)=x}b^i_kc^i_l.
\end{align*}
Notice that in the equality above we should have $d(h)=kl=d(kl)=d(l)$. In this case $k=l^{-1}$ and $z=\alpha_{l^{-1}}(x)=y$. Thus,
\begin{align*}
a_{d(h)}=\sum_{i=1}^n\sum_{\alpha_l(y)=x}b^i_{l^{-1}}c^i_l \in \sum_{{g \in \g}\atop{\alpha_g(y)=x}}A_{g^{-1}}A_g .
\end{align*}

(iii) Suppose that $\sum\limits_{{g \in \g}\atop{\alpha_g(y)=x}}A_{g^{-1}}A_g=\sum_{g \in \g}A_{d(g)}$ is valid for all $y \in X$. Consider $a_h\delta_y \in A \#_{\alpha}^{\g} X$, so $y \in X_{d(h)}$. Since $1_{d(h)} \in \sum_{g \in \g} A_{d(g)}$, then in particular
\begin{align*}
1_{d(h)}=\sum_{i=1}^n\sum_{{g \in \g} \atop {\alpha_g(y)=x}}b^i_{g^{-1}}c^i_g \in \sum_{{g \in \g}\atop{\alpha_g(y)=x}}A_{g^{-1}}A_g.
\end{align*}
Hence,
\begin{align*}
a_h\delta_y=a_h1_{d(h)}\delta_y=a_h \Bigg( \sum_{i=1}^n\sum_{{g \in \g} \atop {\alpha_g(y)=x}}b^i_{g^{-1}}c^i_g \Bigg)\delta_y=\sum_{i=1}^n\sum_{{g \in \g} \atop {\alpha_g(y)=x}}[a_hb^i_{g^{-1}},c^i_g],
\end{align*}
because $a_hb^i_{g^{-1}} \in W_{\alpha_{h}(y)} \subseteq W$ and $c^i_g \in V_y$, for all $g \in \g$ such that $\alpha_g(y)=x$. Therefore, $[\, ,]$ is surjective.
\end{proof}

By the Theorem \ref{morita}, we have that $A^{\g_x}$ and $A \#_{\alpha}^{\g} X$ are Morita equivalent.

\section{A final remark}

Let $(H,m,1_H,\Delta, \varepsilon,S)$ be a weak Hopf algebra and $A$ be a left $H$-module algebra. The {\it target counital subalgebra} $H_t$ is given by $\{h \in H : (\varepsilon \otimes id)(\Delta(1)(h \otimes 1)) = h\}$. The {\it{smash product algebra $A\# H$}} is defined in \cite{dmitri} as the $\ku$-vector space $A\otimes_{H_t}H$ with multiplication
\begin{align*} 
(a\# g)(b\# h)&=a(g_1\cdot b)\# g_2h,
\end{align*}
for all $a,b\in A$ and $g,h\in H$. Clearly this algebra is associative with unity $1_A \# 1_H$.

The {\it groupoid algebra} $\ku \g$ is the $\ku$-vector space with basis $\Lambda=\{g: g \in \g \}$ and multiplication induced by composition of $\g$. This is a weak Hopf algebra via $\Delta(g)= g \otimes g$, $\varepsilon(g)=1$ and $S(g)=g^{-1}$, for all $g \in \g$. If $\g$ is finite, then $\ku\g^*$, the dual of groupoid algebra, also is a weak Hopf algebra. Explicitly, if $\{v_g: g \in \g \}$ the dual basis of $\Lambda$, then 
\begin{align*}
\Delta(v_g)=\sum_{h \in D_{d(g)}} v_{gh^{-1}} \otimes v_h, & &
\varepsilon(v_g) &= \left\{\begin{array}{ll}
1,\mbox{ if } g\in \g_0\\
0,\mbox{ otherwise}
\end{array}
\right.
\mbox{and }& & S(v_g)=v_{g^{-1}},
\end{align*}
for all $g \in \g$.

The existence of a $\g$-grading on $A$ is equivalent to say that $A$ is a $\ku \g^*$-module algebra, with action given by $v_g \cdot a=a_g$, for all $a \in A$ and $g \in \g$ (cf. \cite{C}). In this case, the corresponding smash product was characterized in \cite[Proposition 1.2]{FMP}, namely
\begin{align} \label{smashdmitri}
A \# \ku\g^*=\oplus_{d(g)=r(h)}A_g \otimes v_h.
\end{align}	
	
The smash product given in \eqref{definsmash} generalizes the smash product defined in \eqref{smashdmitri}. Indeed, let $X=\{v_g : g \in \g\}$. Then $X$ is a $\g$-set via $\alpha=(X_g,\alpha_g)_{g \in \g}$, where $X_g=\{v_h: h \in R_{r(g)}\}$ and $\alpha_g: X_{g^{-1}} \to X_g$ is given by $\alpha_g(v_h)=v_{gh}$, for all $g \in \g$. Is immediate to verify that $\psi: A \# \ku\g^* \to A \#_{\alpha}^{\g} X$ given by $\psi(\sum _{d(g)=r(h)} a_g \# v_h)= \sum_{d(g)=r(h)}a_g \delta_{v_h}$ is an isomorphism of algebras. Then we have the following result, which is a consequence of Theorem \ref{dualidade}.

\begin{prop}
Let $\g$ be a finite groupoid and $A$ a $\g$-graded algebra. Then 	
\begin{align*}
(A \# \ku\g^*) *_{\gamma} \g \simeq End({A \# \ku\g^*}_{A \#_{\lambda}^{\g} O^{\g}}).
\end{align*}
	
\end{prop}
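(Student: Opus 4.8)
The plan is to recognize the statement as the special case $\K=\g$ of Theorem \ref{dualidade} and then transport the conclusion across the explicit algebra isomorphism $\psi \colon A \# \ku\g^* \to A \#_{\alpha}^{\g} X$ introduced just above. First I would equip $X=\{v_g : g \in \g\}$ with the second $\g$-action needed to form a $(\g_{\alpha},\g_{\beta})$-set: besides $\alpha$ (left translation, coming from Example \ref{ex1} via $g \mapsto v_g$), I would use the action $\beta$ coming from Example \ref{ex2}, namely $Y_k=\{v_h : h \in D_{r(k)}\}$ and $\beta_k(v_h)=v_{hk^{-1}}$. Since $g \mapsto v_g$ is a bijection $\g \to X$ identifying $\alpha,\beta$ with the actions of Examples \ref{ex1} and \ref{ex2}, the example recorded immediately after Definition \ref{def37} shows that $X$ is a split $(\g_{\alpha},\g_{\beta})$-set; and because $\g$ is finite, $X$ is finite and both object sets are finite, so all the finiteness hypotheses of Theorem \ref{dualidade} hold.

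The only hypothesis requiring an actual check is that the action $\beta$ of $\K=\g$ on $X$ is fully faithful. I would argue as follows: suppose $\beta_k(v_h)=v_h$ for some $v_h \in Y_k \cap Y_{k^{-1}}$. Membership $v_h \in Y_{k^{-1}}$ forces $d(h)=d(k)$, and the equality $v_{hk^{-1}}=v_h$ gives $hk^{-1}=h$; multiplying on the left by $h^{-1}$ yields $k^{-1}=h^{-1}h=id_{d(h)}$, whence $k=id_{d(h)} \in \g_0$. Thus $\beta$ is fully faithful, and Theorem \ref{dualidade} applies to give
\[
(A \#_{\alpha}^{\g} X)*_{\gamma} \g \simeq End\big({A \#_{\alpha}^{\g} X}_{A \#_{\lambda}^{\g} O^{\g}}\big).
\]

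Finally I would push this isomorphism through $\psi$. Because $\psi$ is an isomorphism of algebras, it carries the $\K$-action $\gamma$ on $A \#_{\alpha}^{\g} X$ produced by Proposition \ref{skew} to a $\g$-action on $A \# \ku\g^*$, and hence lifts to an isomorphism $(A \# \ku\g^*)*_{\gamma}\g \cong (A \#_{\alpha}^{\g} X)*_{\gamma}\g$ of skew groupoid rings; simultaneously it matches the subalgebras of invariants, i.e.\ the two copies of $A \#_{\lambda}^{\g} O^{\g}$ embedded via $\varphi^*$ (using Proposition \ref{partefixa}), together with the induced right-module structures, so the corresponding endomorphism rings are identified. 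Composing these identifications with the displayed isomorphism yields the claim. I expect the genuine obstacle to lie in this last transport step: one must verify that $\psi$ intertwines the two $\gamma$-actions componentwise, so that it really does lift to the skew groupoid rings and preserve the module structure over $A \#_{\lambda}^{\g} O^{\g}$. Since $\psi$ is given by an explicit formula and $\gamma$ on the $A \# \ku\g^*$ side is defined precisely so as to make $\psi$ equivariant, this reduces to routine verification rather than a new idea.
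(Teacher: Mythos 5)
Your proposal is correct and follows essentially the same route as the paper: identify $A \# \ku\g^*$ with $A \#_{\alpha}^{\g} X$ for $X=\{v_g : g\in\g\}$, equip $X$ with the right-translation action $\beta$ so that it becomes a finite split $(\g_{\alpha},\g_{\beta})$-set, check that $\beta$ is fully faithful, and invoke Theorem \ref{dualidade}. Your explicit verification of full faithfulness and of the transport across $\psi$ just spells out steps the paper leaves as ``easy to verify.''
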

\begin{proof}
	
As we observed in the previous paragraph, $X=\{v_g : g \in \g\}$ is a $\g$-set via $\alpha=(X_g,\alpha_g)_{g \in \g}$. Moreover $X$ is a $\g$-set via $\beta = (Y_g, \beta_g)_{g \in \g}$ where $Y_g=\{v_h: h \in D_{r(g)}\}$ and $\beta_g: Y_{g^{-1}} \to Y_g$ is defined by $\beta_g(v_h)=v_{hg^{-1}}$, for all $g \in \g$ and $h \in Y_{g^{-1}}$. 
It is easy to verify that with this action $X$ is a split $(\g_{\alpha}, \g_{\beta})$-set and 
 that the action $\beta$ is fully faithful. The result now follows from Theorem \ref{dualidade}.
\end{proof}

%%%%%%%%%%%%%%%%%%%%%%%%%%%%%%%%%%%%%%%%%%%%%%%%%%%%%%%%%%%%%%%%%%%%%%%%%%%%%%%%%%%%%%%%
\bibliographystyle{amsalpha}
{
}

\end{document}